      \newtheorem{assumption}{Assumption}
      \newtheorem{remark}{Remark}
\title{Kalman-based Stochastic Gradient Method with Stop Condition and Insensitivity to Conditioning} 
\author{Vivak Patel \thanks{University of Chicago, Department of Statistics. 
(\email{vp314@uchicago.edu}). Questions, comments, or corrections
to this document may be directed to that email address.}}
\newcommand{\norm}[1]{\left\Vert #1 \right\Vert}
\newcommand{\1}[1]{\textbf{1}\left[ #1 \right]}
\newcommand{\Prb}[1]{\mathbb{P}\left[ #1 \right]}
\newcommand{\E}[1]{\mathbb{E}\left[ #1 \right]}
\newcommand{\tr}[1]{\mathbf{tr}\left[  #1 \right]}
\newcommand{\bigO}[1]{\mathcal{O}\left[ #1 \right]}
\newcommand{\cond}[2]{\mathbb{E}\left[\left. #1 \right\vert #2 \right]}
\newcommand{\NchooseK}[2]{\genfrac{(}{)}{0pt}{}{#1}{#2}}
\begin{document}
\maketitle
\slugger{siopt}{xxxx}{xx}{x}{x--x}

\begin{abstract}
Modern proximal and stochastic gradient descent (SGD) methods are believed to efficiently minimize large composite objective functions, but such methods have two algorithmic challenges: (1) a lack of fast or justified stop conditions, and (2) sensitivity to the objective function's conditioning. In response to the first challenge, modern proximal and SGD methods guarantee convergence only after multiple epochs, but such a guarantee renders proximal and SGD methods infeasible when the number of component functions is very large or infinite. In response to the second challenge, second order SGD methods have been developed, but they are marred by the complexity of their analysis. In this work, we address these challenges on the limited, but important, linear regression problem by introducing and analyzing a second order proximal/SGD method based on Kalman Filtering (kSGD). Through our analysis, we show kSGD is asymptotically optimal, develop a fast algorithm for very large, infinite or streaming data sources with a justified stop condition, prove that kSGD is insensitive to the problem's conditioning, and develop a unique approach for analyzing the complex second order dynamics. Our theoretical results are supported by numerical experiments on three regression problems (linear, nonparametric wavelet, and logistic) using three large publicly available datasets. Moreover, our analysis and experiments lay a foundation for embedding kSGD in multiple epoch algorithms, extending kSGD to other problem classes, and developing parallel and low memory kSGD implementations.
\end{abstract}

\begin{keywords} Data Assimilation, Parameter Estimation, Kalman Filter, Proximal Methods, Stochastic Gradient Descent, Composite Objective Functions\end{keywords}

\begin{AMS} 62L12, 62J05, 68Q32, 90C53, 93E35
\end{AMS}

\pagestyle{myheadings}
\thispagestyle{plain}
\markboth{Vivak Patel}{Kalman-based Stochastic Gradient Method}

\section{Introduction}
In the data sciences, proximal and stochastic gradient descent (SGD) methods are widely applied to minimize objective functions over a parameter $\beta \in \mathbb{R}^n$ of the form:
\begin{equation}
\label{eqn:composite objective}
\sum_{k=1}^N f_k(\beta), \quad f_i:\mathbb{R}^n \to \mathbb{R}, \quad \forall i=1,\ldots,N
\end{equation}
When the number of component functions, $N$, is large, proximal and SGD methods are believed to require fewer memory and computing resources in comparison to classical methods (e.g. gradient descent, quasi-Newton) \cite{bert2010, combettes2011}. However, proximal and SGD methods' computational benefits are reduced by two algorithmic challenges: (1) the difficulty of developing computationally fast or justified stop conditions, and (2) the difficulty of overcoming sensitivity to the conditioning of the objective function \cite{combettes2011, roux2012, fercoq2015, konevcny2013, konevcny2014, xiao2014, shalev2014, byrd2016, moritz2015}. 

In response to the challenge of developing computationally fast, justified stop conditions, many proximal and SGD methods guarantee convergence up to a user-specified probability by requiring multiple epochs --- full passes over all $N$ component functions --- to be completed, where the number of epochs increases as the user-specified probability increases \cite{roux2012, fercoq2015, konevcny2013, konevcny2014, xiao2014, shalev2014, byrd2016, moritz2015}. As a result, these proximal and SGD methods become inefficient when high communication costs are incurred for completing a single epoch, and such is the case when $N$ is very large, streaming or infinite \cite{bert2010}.

In response to the challenge of overcoming sensitivity to the conditioning of the objective function, second order SGD methods have been developed. Unfortunately, second order SGD methods are marred by the complexity of their analysis owing to the nonlinear dynamics of their second order approximations, the stochastic nature of the gradients, and the difficulty of analyzing tuning parameters \cite{amari2000,schraudolph2007,bordes2009,byrd2016}. For example, in the method of \cite{amari2000}, the second order estimate averages weighted new information with a previous second order estimate. However, the second order estimate initialization and weight parameter tuning is difficult in practice \cite{schraudolph2007,bordes2009}, and is not accounted for in the analysis \cite{amari2000}. In order to account for the tuning parameters, one successful second order SGD analysis strategy is an adapted classical Lyapunov approach \cite{byrd2016, moritz2015}. However, the Lyapunov approach falls short of demonstrating that these second order SGD methods are insensitive to the conditioning of the objective function. For example, the Lyapunov-based convergence theorem in \cite{byrd2016} suggests that the convergence rate depends on the conditioning of the Hessian of the objective function times the conditioning of the BFGS estimates. Such a Lyapunov-based theoretical guarantee offers no improvement over what we achieve with the usual SGD \cite{murata1998,bousquet2008}, and clash against what we intuitively expect from a second order method.  

In this paper, we progress towards addressing these gaps by generalizing proximal and SGD methods using the principles of the Kalman Filter \cite{kalman1960}; we refer to this generalization as Kalman-based stochastic gradient descent (kSGD). We analyze kSGD's properties on the class of linear regression problems, which we concede is a limited class, but is an inherently important class as discussed further in \S \ref{sec:Linear Regression Problem}. Moreover, we use a probabilistic approach to analyze kSGD's stochastic second order dynamics and tuning parameter choices, thereby avoiding the difficulties associated with the Lyapunov approach. As a consequence of our analysis, 
\begin{remunerate}
\item We justify a computationally fast stop condition for the kSGD method, and demonstrate its effectiveness on several examples, thereby addressing the first algorithmic challenge faced by proximal and SGD methods.
\item We prove that the kSGD method is insensitive to the conditioning of the objective function, thereby addressing the second algorithmic challenge faced by proximal and SGD methods.
\item We show that the kSGD method aymptotically recovers the optimal stochastic gradient estimator for the linear regression problem. 
\item We create a provably convergent, fast and simple algorithm for solving the linear regression problem, which is robust to the choice of tuning parameters and which can be applied to very large, streaming or infinite data sources.
\item We lay a foundation for embedding kSGD in multiple epoch algorithms, extending kSGD to other problem classes, and developing parallel and low memory kSGD implementations.
\end{remunerate}

We organize the rest of the paper as follows. In \S \ref{sec:Linear Regression Problem}, we formulate the linear regression problem, and discuss its importance. In \S \ref{sec:Optimal Estimator + kSGD}, we construct an optimal proximal/SGD method from which we derive kSGD. In \S \ref{sec:Analysis}, we prove that kSGD approaches the optimal proximal/SGD method, we prove that kSGD is insensitive to the conditioning of the problem, we analyze the impact of tuning parameters on convergence, and we construct an adaptive tuning parameter strategy. In \S \ref{sec:Algorithms}, we present a kSGD algorithm with a stop condition, and an adaptive tuning parameter selection algorithm. In \S \ref{sec:complexity}, we compare the computational and memory complexities of kSGD with the usual SGD,  and SQN \cite{byrd2016}. In \S \ref{sec:Empirical Example}, we support the mathematical arguments herein using three numerical examples: a linear regression problem using a large data set provided by the Center for Medicare \& Medicaid Services (CMS), a nonparametric wavelet regression problem using a large data set of gas sensor readings, and a logistic regression problem using a moderately sized data set of adult incomes. In \S \ref{sec:Conclusion}, we summarize this work.

\section{The Linear Regression Problem \& Its Importance}
\label{sec:Linear Regression Problem}
First, the linear regression problem is formulated, and then its importance is discussed. In order to formulate the linear regression problem, the linear model must be specified:
\begin{assumption} \label{assumption:model}
Suppose that $(X,Y),(X_1,Y_1),(X_2,Y_2),\ldots \in \mathbb{R}^n \times \mathbb{R}$ are independent, identically distributed,  and $\exists \beta^* \in \mathbb{R}^n$ such that:
$$
Y_i = X_i^T\beta^*  + \epsilon_i
$$
where $\epsilon_i$ are independent, identically distributed mean zero random variables with variance $\sigma^2 \in (0, \infty)$, and are independent of all $X_i$. 
\end{assumption}
\begin{remark}
The model does not assume a distribution for the errors, $\epsilon_i$; hence, the results presented will hold even if the model is misspecified with a reinterpretation of $\sigma^2$ as the limiting mean residuals squared. In addition, if the model has heteroscedasticity, the convergence of the kSGD parameter estimate to $\beta^*$ will still hold in the results below as long as the supremum over all variances is bounded.
\end{remark}

Informally, the linear regression problem is the task of determining $\beta^*$ from the data $(X_1,Y_1),(X_2,Y_2),\ldots$. To formalize this, the linear regression problem can be restated as minimizing a loss function over the data, which is ideally (see \cite{bean2013})
$$ 
d(X,Y,\beta) = \frac{1}{2}\left( Y - \beta^T X \right)^2
$$
up to a positive multiplicative constant and conditioned on $X$. Because the ideal linear regression loss, $d(X,Y,\beta)$ is a function of random variables $(X,Y)$, it is also a random variable. In order to simplify optimizing over the random variable $d(X,Y,\beta)$, the linear regression objective function is taken to be the expected value of $d(X,Y,\beta)$:
\begin{equation}
\label{eqn:objective function}
D(\beta) := \E{d(X,Y,\beta)} = D(\beta^*) + \frac{1}{2}(\beta-\beta^*)^TQ_*(\beta-\beta^*)
\end{equation}
where $Q_*= \E{XX^T}$. 

Thus, the linear regression problem is the task of minimizing $D(\beta)$. However, on its own, the linear regression problem is ill-posed for several reasons. First, despite the simple form of the linear regression objective, it cannot be constructed because the distribution of $(X,Y)$ is rarely known. To account for this, the linear regression objective function is replaced with the approximation
\begin{equation}
\label{eqn:objective function, approximate}
\hat{D}(\beta) = \frac{1}{N}\sum_{i=1}^N d(X_i,Y_i,\beta)
\end{equation}
which is exactly of the form (\ref{eqn:composite objective}). Second, the linear regression objective's Hessian, $Q_*$, may not be well-specified, that is, $Q_* \not\prec \infty$. One way to ensure that $Q_* \prec \infty$ is to require
$$\lambda_{\max}(Q_*) \leq \tr{Q_*} = \E{\tr{XX^T}} = \E{\norm{X}_2^2} < \infty$$
This requirement is collected in the next assumption:
\begin{assumption}
\label{assumption:X is in L2}
$X \in L^2$. That is, $\E{\norm{X}_2^2} < \infty$. 
\end{assumption}

For some results, Assumption \ref{assumption:X is in L2} is strengthened by:
\begin{assumption}
\label{assumption:X is Linf}
$X \in L^\infty$. That is, $\norm{X}_\infty < \infty$ almost surely.
\end{assumption}

Third, from an optimization perspective, the minimizer of the linear regression objective must satisfy second order sufficiency conditions (Theorem 2.4 in \cite{nocedal2006}); that is, $Q_*$ must be positive definite. This can be ensured by the weaker requirement
\begin{assumption}
\label{assumption:X spans Rn}
The linear span of the image of $X$ is $\mathbb{R}^n$. Specifically, for all unit vectors $v \in \mathbb{R}^n$, 
$ \Prb{ |X^T v| = 0 } < 1$.
\end{assumption}

To see how, suppose there is a unit vector $v \in \mathbb{R}^n$ such that $0 \geq v^T Q_* v$. Then
$ 0 \geq v^T Q_* v = \E{(X^T v)^2}$.
Hence, $X^T v = 0 $ almost surely, which contradicts Assumption \ref{assumption:X spans Rn}.

\begin{remark} We suspect that Assumption \ref{assumption:X spans Rn} can be weakened by allowing $X$ to be orthogonal to a subspace. As a result, $\beta^*$ would no longer be unique, but would be specified by a hyperplane. To account for this in the analysis, the error measure would need to be replaced by the minimum distance between the estimate and this hyperplane, and the estimation sequence would need to be considered on the lower dimensional manifold specified by the image space of $X$.
\end{remark}

At first glance, the linear regression problem seems to be a limited problem class, and one that has already been sufficiently addressed; however, solving modern, very large linear regression problems is still a topic under active research (e.g. \cite{fabregat2014,frank2015}). One example of very large linear regression problems comes from recent physics studies in which background noise models are estimated from large simulated data sets using maximum likelihood estimation \cite{atlas2012,jaranowski2012}, which can be recast as solving a sequence of very large linear regression problems \cite{wedderburn1974}. Another example comes from genome-wide association studies in which multiple very large linear regression problems arise directly \cite{teslovich2010,aulchenko2010}. For such linear regression problems, the high communication time of reading the entire data set renders multiple epoch algorithms impractical. One recourse is to apply the usual SGD; however, SGD will often stall before converging to a minimizer. Thus, kSGD becomes a viable alternative for solving such problems as it only requires a single pass through large data sets, and does not stall. These concepts are demonstrated on a linear regression problem on CMS medical claims payments in \S \ref{subsec:linear regression}.

Moreover, the linear regression problem not only encompasses the usual linear model of Assumption \ref{assumption:model}, but also the normal means models which include many nonparametric regression models (see \cite{nussbaum1996}, Ch. 7 in \cite{wasserman2006}). For example, the linear regression problem in (\ref{eqn:objective function, approximate}) includes B-spline regression, for which $X_i$ is replaced by a vector-valued function evaluated at $X_i$ \cite{racine2014}. The linear regression problem's applicability to normal means models is demonstrated on a non-parametric wavelet regression problem on gas sensor reading data in \S \ref{subsec:nonparametric wavelet regression}.

Finally, the linear regression problem analysis is an essential step in generalizing the kSGD theory to other problem classes. To be explicit, a common pattern in nonlinear programming is to model the objective function locally as a quadratic, and determine the next iterate by minimizing this local model \cite{bert1999,nocedal2006}. Indeed, for objective functions with an underlying statistical model, the local quadratic model can be formalized using the theory of local asymptotic normality (see Ch. 5 in \cite{van2000}, Ch. 6 in \cite{lecam2012}); thus, understanding the behavior of kSGD in a quadratic model is an essential step in extending the analysis of kSGD to other problem classes. This principle is demonstrated on a logistic regression problem for modeling adult income categories in \S \ref{subsec:logistic regression}.

Now that we have established the linear regression problem, we construct an optimal stochastic gradient estimator from which we will derive kSGD. 

\section{An Optimal Estimator \& kSGD}
\label{sec:Optimal Estimator + kSGD}

Let $\mathcal{F}_{k} = \sigma(X_1,\ldots,X_k)$, the $\sigma$-algebra of the random variables $X_1,\ldots,X_k$, and consider the following general update scheme
\begin{equation}
\label{eqn:General Update Scheme}
\beta_{k+1} = \beta_k + G_{k+1}(Y_{k+1} - \beta_k^T X_{k+1})
\end{equation}
where $G_{k+1}$ is a random variable in $\mathbb{R}^n$ and is measurable with respect to $\mathcal{F}_{k+1}$. Using Assumption \ref{assumption:model}, (\ref{eqn:General Update Scheme}) can be rewritten as
\begin{equation*}
\label{eqn:General Update Scheme w Model Assumption}
\beta_{k+1} = \beta_{k} - G_{k+1}X_{k+1}^T(\beta_k - \beta^*) + G_{k+1}\epsilon_{k+1}
\end{equation*}
We will choose an optimal $G_{k+1}$ in the sense that it minimizes the $l^2$ error between $\beta_{k+1}$ and $\beta^*$ given $\mathcal{F}_{k+1}$. Noting that $G_{k+1}$ is measurable with respect to $\mathcal{F}_{k+1}$, and using the independence, first moment and second moment properties of $\epsilon_k$:
\begin{align*}
&\E{\left. \norm{\beta_{k+1} - \beta^*}^2 \right\vert \mathcal{F}_{k+1}} \\
	&\quad= \tr{\cond{(\beta_{k} - \beta^*)(\beta_{k} - \beta^*)^T}{\mathcal{F}_{k}}} \\
	&\quad\quad- \tr{G_{k+1}X_{k+1}^T\cond{(\beta_{k} - \beta^*)(\beta_{k} - \beta^*)^T }{ \mathcal{F}_{k}}}\\
	&\quad\quad- \tr{\cond{ (\beta_{k} - \beta^*)(\beta_{k} - \beta^*)^T }{ \mathcal{F}_{k}} X_{k+1}G_{k+1}^T}\\
	&\quad\quad+ \tr{ G_{k+1}X_{k+1}^T\cond{(\beta_{k} - \beta^*)(\beta_{k} - \beta^*)^T  }{\mathcal{F}_{k}} X_{k+1}G_{k+1}^T }\\
		&\quad\quad+ \sigma^2 \tr{G_{k+1}G_{k+1}^T}
\end{align*}
We now write $\mathcal{M}_{k} = \E{\left. (\beta_{k} - \beta^*)(\beta_{k} - \beta^*)^T \right\vert \mathcal{F}_{k}}$, which gives:
\begin{align*}
\E{\left. \norm{\beta_{k+1} - \beta^*}^2 \right\vert \mathcal{F}_{k+1}} 
	& = \tr{ \mathcal{M}_k } 
	- \tr{ G_{k+1}X_{k+1}^T \mathcal{M}_k}
	- \tr{\mathcal{M}_k X_{k+1}G_{k+1}^T } \\
	&\quad + \tr{ G_{k+1}X_{k+1}^T \mathcal{M}_k X_{k+1}G_{k+1}^T } 
	+ \sigma^2 \tr{G_{k+1}G_{k+1}^T}
\end{align*}
Differentiating with respect to $G_{k+1}$ and solving for $G_{k+1}$ when this expression is set to nullity, we have that:
\begin{equation}
\label{eqn:Kalman Gain}
G_{k+1} = \frac{\mathcal{M}_{k}X_{k+1}}{\sigma^2 + X_{k+1}^T \mathcal{M}_{k}X_{k+1}}
\end{equation} 
Moreover, the derivation gives us an update scheme for $\mathcal{M}_{k}$ as well:
\begin{align*}
	\mathcal{M}_{k+1} &= \mathcal{M}_k - G_{k+1} X_{k+1}^T \mathcal{M}_k - \mathcal{M}_k X_{k+1} G_{k+1}^T + (\sigma^2 + X_{k+1}^T \mathcal{M}_k X_{k+1})G_{k+1} G_{k+1}^T \\
			&= \mathcal{M}_k - 2\frac{\mathcal{M}_k X_{k+1}X_{k+1}^T \mathcal{M}_k }{\sigma^2 + X_{k+1}^T \mathcal{M}_k X_{k+1}} \\
			&\quad+ \frac{(\sigma^2 + X_{k+1}^T \mathcal{M}_k X_{k+1}) \mathcal{M}_k X_{k+1}X_{k+1}^T \mathcal{M}_k }{(\sigma^2 + X_{k+1}^T \mathcal{M}_k X_{k+1})^2}
\end{align*}
To summarize, we have the optimal stochastic gradient update scheme:
\begin{equation}
\label{eqn:Parameter Estimate Update}
\beta_{k+1} = \beta_k + \frac{\mathcal{M}_{k}X_{k+1}}{\sigma^2 + X_{k+1}^T \mathcal{M}_{k}X_{k+1}}(Y_{k+1} - \beta_k^T X_{k+1})
\end{equation}
\begin{equation}
\label{eqn:Covariance Update}
\mathcal{M}_{k+1} = \left( I - \frac{\mathcal{M}_k X_{k+1} X_{k+1}^T}{\sigma^2 + X_{k+1}^T \mathcal{M}_k X_{k+1}} \right) \mathcal{M}_k
\end{equation}
Because $\sigma^2$ and $\mathcal{M}_k$ are not known, we take the kSGD method to be:
\begin{equation}
\label{eqn:updatebeta}
\beta_{k} = \beta_{k-1} + \frac{M_{k-1}X_k }{\gamma_k^2 + X_k^T M_{k-1} X_k}\left(Y_k - \beta_{k-1}^T X_k\right)
\end{equation}
\begin{equation}
\label{eqn:updateCov}
M_{k} = \left(I - \frac{M_{k-1}X_k X_k^T}{\gamma_k^2 + X_k^T M_{k-1}X_k}\right)M_{k-1}
\end{equation}
where $\beta_0 \in \mathbb{R}^n$ is arbitrary, and $M_0$ can be any positive definite matrix, but for simplicity, we will take it to be the identity. The sequence $\lbrace \gamma_k^2 \rbrace$ replace the unknown $\sigma^2$ value and will be referred to as tuning parameters. We refer to $\beta_k$ as a parameter estimate, $\mathcal{M}_k$ as the true covariance of the parameter estimate, and $M_k$ as the estimated covariance of the parameter estimate. 

kSGD's derivation raises several natural questions about the impact of the $M_k$ and $\lbrace \gamma_k^2 \rbrace$ substitutions on the behavior of kSGD:
\begin{remunerate}
\item Does the estimated covariance, $M_k$, approximate the true covariance, $\mathcal{M}_k$? Indeed, if $M_k$ approximates $\mathcal{M}_k$ then it is clear that kSGD approximates the optimal stochastic gradient estimator, and, because $\tr{\mathcal{M}_k} = \cond{\norm{\beta_k - \beta^*}_2^2}{\mathcal{F}_k}$, $M_k$ can be used as a measure of the error between $\beta_k$ and $\beta^*$. In \S \ref{subsec:Convergence of Hessian Estimate}, we prove that $M_k$ will bound $\mathcal{M}_k$ arbitrarily well from above and below in the limit up to a multiplicative constant depending on $\lbrace \gamma_k^2 \rbrace$ (Theorem \ref{thrm: M_k to 0, M_k behaves like true covariance}). Additionally, we show that $M_k \to 0$, thereby proving that $\cond{\norm{\beta_k - \beta^*}_2^2}{\mathcal{F}_k} \to 0$ (Corollary \ref{cor:Parameter Errors Converge}). Consequently, $M_k$ can be used as a stop condition; thus, kSGD addresses the first algorithmic challenge faced by proximal and SGD methods. 
\item Given that the batch minimizer converges to $\beta^*$ at a rate $\bigO{\sigma^2 n/k}$, and this convergence rate has no dependence on the conditioning of the problem (see \cite{murata1998}, Ch. 5 in \cite{van2000}), how does kSGD's convergence compare? In \S \ref{subsec:Convergence of Objective Function}, we show that kSGD's convergence rate is comparable to the batch minimizer's convergence rate, and this rate has no dependence on the conditioning of the problem (Theorem \ref{thrm:Q inv concentrates about M_k}, Corollary \ref{cor:Convergence of Objective Function}). Consequently, kSGD addresses the second algorithmic challenge faced by proximal and SGD methods.
\item Finally, what role do the tuning parameters, $\lbrace \gamma_k^2 \rbrace$, play in the convergence? In \S \ref{subsec:Conditions on Tuning Parameters}, $\lbrace \gamma_k^2 \rbrace$ are determined to play two roles: (1) $\lbrace \gamma_k^2 \rbrace$ moderate how tightly $M_k$ will bound $\mathcal{M}_k$, and (2) $\lbrace \gamma_k^2 \rbrace$ moderate how quickly $M_k \to 0$. Specifically, when $\gamma_k^2$ are within a few orders of magnitude of $\sigma^2$, the estimated covariance, $M_k$, will tightly bound the true covariance, $\mathcal{M}_k$, and when $\gamma_k^2$ are small, $M_k \to 0$ quickly. Unfortunately, if $\sigma^2$ is very large, these two tuning parameter roles conflict. This conflict is reconciled with an adaptive tuning parameter strategy motivated in Theorem \ref{thrm:M_k estimates covariance better with adaptive tuning parameter}, and constructed, in a sense, in \S \ref{subsec:Adaptive Choice of Tuning Parameters}.
\end{remunerate}

\section{Analysis}
\label{sec:Analysis} In the analysis below, we use the following conventions and notation. Recalling that $\mathcal{F}_k = \sigma(X_1,\ldots,X_k)$, we consider two types of error in our analysis:
\begin{equation}
\label{eqn:errorWeak}
e_{k} = \E{\beta_k \vert \mathcal{F}_k} - \beta^*
\end{equation}
and
\begin{equation}
\label{eqn:errorStrong}
E_k = \beta_k - \beta^*
\end{equation}
which are related by $e_k = \cond{E_k}{\mathcal{F}_k}$. 

\subsection{Convergence of the Estimated Covariance and Estimated Parameter}
\label{subsec:Convergence of Hessian Estimate}
Informally, the main result of this section, Theorem \ref{thrm: M_k to 0, M_k behaves like true covariance}, states that $M_k$ bounds $\mathcal{M}_k$ from above and below arbitrarily well in the limit. To establish Theorem \ref{thrm: M_k to 0, M_k behaves like true covariance}, we will need two basic calculations collected in Lemmas \ref{lemm:covRecursion} and \ref{lemm:errorStrongRecursion}. In the calculations below, we recall that $M_0$ is taken to be the identity for simplicity.
\begin{lemma}
\label{lemm:covRecursion}
For $j=1,\ldots,k+1$, if $ 0 < \gamma_j^{2} < \infty$ then $M_{k+1}$ is symmetric, positive definite matrices and $$
M_{k+1}^{-1} = M_{k}^{-1} + \frac{1}{\gamma_{k+1}^2}X_{k+1} X_{k+1}^T$$
\end{lemma}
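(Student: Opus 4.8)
The plan is to prove the statement by induction on the index $k$, with the Sherman--Morrison formula as the central tool. The base case is immediate: $M_0 = I$ is symmetric and positive definite, and $M_0^{-1} = I$. For the inductive step I would assume that $M_k$ is symmetric and positive definite (hence invertible), and then establish the three claimed properties for $M_{k+1}$ in turn.

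Symmetry of $M_{k+1}$ is the easiest. Writing the update (\ref{eqn:updateCov}) in the expanded form $M_{k+1} = M_k - (M_k X_{k+1} X_{k+1}^T M_k)/(\gamma_{k+1}^2 + X_{k+1}^T M_k X_{k+1})$, the subtracted term is a symmetric matrix divided by a scalar, so $M_{k+1}^T = M_{k+1}$ follows from the symmetry of $M_k$. For the inverse formula, I would apply Sherman--Morrison to the candidate matrix $M_k^{-1} + \gamma_{k+1}^{-2} X_{k+1} X_{k+1}^T$, taking $A = M_k^{-1}$, $u = \gamma_{k+1}^{-2} X_{k+1}$, and $v = X_{k+1}$; this requires $A$ to be invertible, which holds by the inductive hypothesis, and the correction denominator $1 + v^T A^{-1} u = 1 + \gamma_{k+1}^{-2} X_{k+1}^T M_k X_{k+1}$ to be nonzero. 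Carrying out the algebra reproduces exactly the right-hand side of (\ref{eqn:updateCov}), so $(M_k^{-1} + \gamma_{k+1}^{-2} X_{k+1} X_{k+1}^T)^{-1} = M_{k+1}$, which is the desired inverse recursion. Finally, positive definiteness of $M_{k+1}$ follows by reading the recursion backwards: $M_k^{-1}$ is positive definite (as the inverse of a positive definite matrix) and $\gamma_{k+1}^{-2} X_{k+1} X_{k+1}^T$ is positive semidefinite since $\gamma_{k+1}^2 > 0$, so their sum $M_{k+1}^{-1}$ is positive definite, and hence so is $M_{k+1}$.

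The only point that requires care --- and the one I would verify first --- is that the scalar $\gamma_{k+1}^2 + X_{k+1}^T M_k X_{k+1}$ in the denominator is strictly positive, so that the update is well defined and the Sherman--Morrison correction denominator does not vanish. This is exactly where the hypothesis $0 < \gamma_{k+1}^2 < \infty$ is used, together with the inductive positive definiteness of $M_k$: these give $X_{k+1}^T M_k X_{k+1} \geq 0$ and hence $\gamma_{k+1}^2 + X_{k+1}^T M_k X_{k+1} \geq \gamma_{k+1}^2 > 0$. I do not anticipate any genuine obstacle beyond this bookkeeping. As an alternative to invoking Sherman--Morrison, one can simply multiply $M_{k+1}$ by $M_k^{-1} + \gamma_{k+1}^{-2} X_{k+1} X_{k+1}^T$ and check directly that the cross terms cancel and the product telescopes to the identity, which sidesteps any appeal to an external identity and keeps the argument self-contained.
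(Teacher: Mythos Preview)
Your proposal is correct and follows essentially the same approach as the paper: induction on $k$ with the Sherman--Morrison identity as the key step. The paper is terser (it starts the base case at $M_1$ and leaves symmetry, positive definiteness, and the nonvanishing of the denominator implicit), but your additional bookkeeping is sound and the argument is the same.
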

\begin{proof}
By the Sherman-Morrison-Woodbury matrix identity:
$$
M_1 = I - \frac{X_1 X_1^T}{\gamma_1^2 + X_1^T X_1} \\
	= \left(I + \frac{1}{\gamma_1^2}X_1 X_1^T\right)^{-1}
$$
Hence, $M_1^{-1} = I + \frac{1}{\gamma_1^2}X_1 X_1^T$. So $M_1$ is symmetric and positive definite. Suppose this is true up to some $k$. By induction and using the Sherman-Morrison-Woodbury matrix identity, we conclude our result:
$$
M_{k+1} = M_k - \frac{M_k X_{k+1} X_{k+1}^T M_k}{\gamma_{k+1}^2 + X_{k+1}^T M_k X_{k+1}} \\
		= \left(M_{k}^{-1} + \frac{1}{\gamma_{k+1}^2}X_{k+1}X_{k+1}^T \right)^{-1}
$$
\qquad\end{proof}

\begin{lemma}
\label{lemm:errorStrongRecursion}
For $j = 1,\ldots, k+1$, if $0 < \gamma_j^2 < \infty$ then
$$
M_{k+1}^{-1}E_{k+1} = M_{k}^{-1}E_k + X_{k+1}\frac{\epsilon_{k+1}}{\gamma_{k+1}^2}
\quad \text{ and }\quad
M_{k+1}^{-1}E_{k+1} = E_0 + \sum_{j=1}^{k+1} X_{j} \frac{\epsilon_j}{\gamma_j^2}
$$
and, recalling $\mathcal{M}_k = \cond{E_k E_k^T}{\mathcal{F}_k}$ where $\mathcal{F}_k = \sigma(X_1,\ldots,X_k)$,
\begin{align*}
\mathcal{M}_{k+1} &= M_{k+1} E_0 E_0^T M_{k+1} + M_{k+1}\left( \sum_{j=1}^{k+1} \frac{\sigma^2}{\gamma_j^2} \frac{1}{\gamma_j^2}X_jX_j^T\right)M_{k+1}
\end{align*}
\end{lemma}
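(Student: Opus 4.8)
The plan is to prove the three identities in sequence: the first two form a telescoping chain, and the third follows by forming an outer product from the second and taking a conditional expectation.

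First I would derive a recursion for the strong error $E_k$. Substituting $Y_k = X_k^T\beta^* + \epsilon_k$ from Assumption \ref{assumption:model} into the update (\ref{eqn:updatebeta}), and abbreviating the gain as $G_k = M_{k-1}X_k/(\gamma_k^2 + X_k^T M_{k-1}X_k)$, the innovation becomes $Y_k - \beta_{k-1}^T X_k = -X_k^T E_{k-1} + \epsilon_k$, so that
$$E_k = (I - G_k X_k^T)E_{k-1} + G_k \epsilon_k.$$
The covariance update (\ref{eqn:updateCov}) is precisely $M_k = (I - G_k X_k^T)M_{k-1}$, yielding the key identity $I - G_k X_k^T = M_k M_{k-1}^{-1}$. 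Substituting this and left-multiplying by $M_k^{-1}$ gives $M_k^{-1}E_k = M_{k-1}^{-1}E_{k-1} + M_k^{-1}G_k \epsilon_k$. The one genuine computation is to show $M_k^{-1}G_k = X_k/\gamma_k^2$, which follows immediately from Lemma \ref{lemm:covRecursion}: writing $M_k^{-1} = M_{k-1}^{-1} + \gamma_k^{-2}X_k X_k^T$ and applying it to $G_k$, the factor $\gamma_k^2 + X_k^T M_{k-1}X_k$ cancels and the expression collapses to $X_k/\gamma_k^2$. This establishes the first identity.

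The second identity then follows by telescoping the first from index $k+1$ down to the base case $M_0^{-1}E_0 = E_0$ (recall $M_0 = I$), accumulating the increments $X_j \epsilon_j/\gamma_j^2$.

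For the third identity I would rewrite the second as $E_{k+1} = M_{k+1}\left(E_0 + \sum_{j=1}^{k+1}X_j \epsilon_j/\gamma_j^2\right)$, form the outer product $E_{k+1}E_{k+1}^T$ using symmetry of $M_{k+1}$, and condition on $\mathcal{F}_{k+1}$. The main point requiring care is the measurability/independence split: $M_{k+1}$, the deterministic $E_0$, and every $X_j$ with $j \le k+1$ are $\mathcal{F}_{k+1}$-measurable and come outside the conditional expectation, whereas each $\epsilon_j$ is \emph{not} $\mathcal{F}_{k+1}$-measurable but is independent of $\mathcal{F}_{k+1} = \sigma(X_1,\ldots,X_{k+1})$, so its conditional moments equal its unconditional ones. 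Hence the cross terms carry a factor $\cond{\epsilon_j}{\mathcal{F}_{k+1}} = \E{\epsilon_j} = 0$ and vanish, while in the double sum only the diagonal survives via $\cond{\epsilon_j \epsilon_l}{\mathcal{F}_{k+1}} = \sigma^2 \delta_{jl}$ (independence and mean zero for $j \ne l$, variance $\sigma^2$ for $j=l$). Collecting terms gives $\mathcal{M}_{k+1} = M_{k+1}E_0 E_0^T M_{k+1} + M_{k+1}\left(\sum_{j=1}^{k+1}\sigma^2 \gamma_j^{-4}X_j X_j^T\right)M_{k+1}$, which is the claimed form since $\sigma^2\gamma_j^{-4} = (\sigma^2/\gamma_j^2)(1/\gamma_j^2)$. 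The main obstacle is really just this bookkeeping in the third part: one must be explicit that the $\epsilon_j$ leave the conditional expectation by independence from the conditioning $\sigma$-algebra rather than by measurability, and that this is exactly what annihilates the cross terms and diagonalizes the noise sum; the remainder rides on Lemma \ref{lemm:covRecursion}.
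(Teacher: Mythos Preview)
Your proposal is correct and follows essentially the same route as the paper: derive the error recursion from the update and Assumption~\ref{assumption:model}, use the covariance update to recognize $I - G_k X_k^T = M_k M_{k-1}^{-1}$, apply Lemma~\ref{lemm:covRecursion} to collapse the noise coefficient to $X_k/\gamma_k^2$, telescope, and then take the outer product and condition. Your presentation is in fact slightly cleaner than the paper's---you compute $M_k^{-1}G_k$ directly rather than expanding $M_{k+1}^{-1}M_k X_{k+1}$ into two fractions and recombining, and you are explicit (and correct) that the conditioning is on $\mathcal{F}_{k+1}$ with the $\epsilon_j$ leaving by independence rather than measurability.
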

\begin{proof}
Using (\ref{eqn:updatebeta}) and Assumption \ref{assumption:model}:
\begin{equation}
\label{eqn:Strong Error Recursion with Noise}
E_{k+1} = \left(I - \frac{M_k X_{k+1}X_{k+1}^T}{\gamma_{k+1}^2+X_{k+1}^T M_k X_{k+1}}\right)E_k + M_k X_{k+1}\frac{\epsilon_{k+1}}{\gamma_{k+1}^2+X_{k+1}^T M_k X_{k+1}}
\end{equation}
Now, premultiplying $E_k$ by $M_k M_k^{-1}$ and using (\ref{eqn:updateCov})
\begin{align*}
E_{k+1}	&= M_{k+1}M_{k}^{-1} E_k + M_k X_{k+1}\frac{\epsilon_{k+1}}{\gamma_{k+1}^2+X_{k+1}^T M_k X_{k+1}} \\
	&= M_{k+1}\left(M_{k}^{-1} E_k + M_{k+1}^{-1} M_k X_{k+1}\frac{\epsilon_{k+1}}{\gamma_{k+1}^2+X_{k+1}^T M_k X_{k+1}} \right)
\end{align*}
Applying the recurrence relation in Lemma \ref{lemm:covRecursion}:
\begin{align*}
E_{k+1}	&= M_{k+1}\left(M_{k}^{-1} E_k + X_{k+1} \frac{\epsilon_{k+1}}{\gamma_{k+1}^2 + X_{k+1}^T M_k X_{k+1}} \right. \\
	&\quad\left. + X_{k+1} \frac{\epsilon_{k+1}}{\gamma_{k+1}^2} \frac{X_{k+1}^T M_k X_{k+1}}{ \gamma_{k+1}^2 + X_{k+1}^T M_k X_{k+1}} \right) \\
	&= M_{k+1} \left( M_{k}^{-1} E_k + X_{k+1} \frac{\epsilon_{k+1}}{\gamma_{k+1}^2} \right)
\end{align*}
Using the recursion, we have that $E_{k+1} = M_{k+1}\left( E_0 + \sum_{j=1}^{k+1} X_j \frac{\epsilon_j}{\gamma_j^2} \right)$. Thus, calculating $E_{k+1}E_{k+1}^T$, taking its conditional expectation with respect to $\mathcal{F}_k = \sigma(X_1,\ldots,X_k)$, and recalling that $\lbrace \epsilon_j \rbrace$ are independent of $\lbrace X_j \rbrace$ by Assumption \ref{assumption:model}, we establish
\begin{align*}
\mathcal{M}_k &= M_{k+1} E_0 E_0^T M_{k+1} + M_{k+1}\left( \sum_{j=1}^{k+1} \frac{\sigma^2}{\gamma_j^2} \frac{1}{\gamma_j^2}X_jX_j^T\right)M_{k+1}
\end{align*}
\qquad \end{proof}

Lemmas \ref{lemm:covRecursion} and \ref{lemm:errorStrongRecursion} suggest a natural condition on the tuning parameters in order to ensure that these calculations hold for all $k$; namely, we require for some $\delta^2$ and $\Delta^2$,
\begin{equation}
\label{eqn:Condition on Tuning Parameters}
0 < \delta^2 \leq \inf_{k} \gamma_k^2 \leq \sup_k \gamma_k^2 \leq \Delta^2 < \infty
\end{equation}
Using this condition and Lemmas \ref{lemm:covRecursion} and \ref{lemm:errorStrongRecursion}, we can also bound $\mathcal{M}_k$ by $M_k$ for all $k \in \mathbb{N}$
\begin{align*}
& M_{k+1} E_0 E_0^T M_{k+1} + \frac{\sigma^2}{\Delta^2}M_{k+1}\left( \sum_{j=1}^{k+1}  \frac{1}{\gamma_j^2}X_jX_j^T\right)M_{k+1} \preceq \mathcal{M}_{k+1} \\
&\quad\preceq  M_{k+1} E_0 E_0^T M_{k+1} + \frac{\sigma^2}{\delta^2}M_{k+1}\left( \sum_{j=1}^{k+1}  \frac{1}{\gamma_j^2}X_jX_j^T\right)M_{k+1}
\end{align*}
Using Lemma \ref{lemm:covRecursion},
\begin{align*}
& M_{k+1} E_0 E_0^T M_{k+1} + \frac{\sigma^2}{\Delta^2}M_{k+1} \left(M_{k+1}^{-1} - I \right) M_{k+1} \preceq \mathcal{M}_{k+1} \\
&\quad\preceq M_{k+1} E_0 E_0^T M_{k+1} + \frac{\sigma^2}{\delta^2}M_{k+1}\left(M_{k+1}^{-1} - I \right)M_{k+1} 
\end{align*}
Using $0 \preceq M_{k+1}E_0E_0^T M_{k+1} \preceq M_{k+1}^2 \norm{E_0}_2^2$, we have
\begin{equation}
\label{eqn:covariance bounded by estimated covariance}
\frac{\sigma^2}{\Delta^2}M_{k+1} - \frac{\sigma^2}{\Delta^2}M_{k+1}^2 
\preceq \mathcal{M}_{k+1}
\preceq  \norm{E_0}_2^2 M_{k+1}^2 + \frac{\sigma^2}{\delta^2}M_{k+1} 
\end{equation} 

Because the true covariance, $\mathcal{M}_k$, is a measure of the error between the estimated, $\beta_k$, and true parameter, $\beta^*$, then we want $\mathcal{M}_k \to 0$, which inequality (\ref{eqn:covariance bounded by estimated covariance}) suggests occurs if $M_k \to 0$. The next theorem formalizes this claim.
\begin{theorem}
\label{thrm: M_k to 0, M_k behaves like true covariance}
If Assumptions \ref{assumption:model}, \ref{assumption:X is in L2}, \ref{assumption:X spans Rn} hold, and the tuning parameters satisfy (\ref{eqn:Condition on Tuning Parameters}), then $M_k \to 0$ almost surely. Moreover, for all $\epsilon >0$, almost surely there exists a $K \in \mathbb{N}$ such that for $k \geq K$ $$ \frac{1-\epsilon}{\Delta^2} M_{k} \preceq \frac{1}{\sigma^2} \mathcal{M}_k \preceq \frac{1+\epsilon}{\delta^2} M_k$$
\end{theorem}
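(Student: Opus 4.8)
The plan is to prove the two assertions in sequence: first the almost-sure convergence $M_k \to 0$, and then, using this convergence together with the sandwich inequality (\ref{eqn:covariance bounded by estimated covariance}), the two-sided comparison of $M_k$ and $\mathcal{M}_k$. The convergence $M_k \to 0$ is the load-bearing step; the comparison then follows from a short eigenvalue argument.

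For $M_k \to 0$, I would begin from the inverse recursion of Lemma \ref{lemm:covRecursion}, which unrolls (using $M_0 = I$) to
\[
M_k^{-1} = I + \sum_{j=1}^{k} \frac{1}{\gamma_j^2} X_j X_j^T .
\]
Since (\ref{eqn:Condition on Tuning Parameters}) gives $\gamma_j^2 \leq \Delta^2$, we have $M_k^{-1} \succeq I + \frac{1}{\Delta^2}\sum_{j=1}^{k} X_j X_j^T$, so it suffices to show $\lambda_{\min}\left(\sum_{j=1}^{k} X_j X_j^T\right) \to \infty$ almost surely. Because the $X_j$ are i.i.d.\ (Assumption \ref{assumption:model}) and $X \in L^2$ (Assumption \ref{assumption:X is in L2}), each entry of $XX^T$ is integrable by Cauchy--Schwarz, so the strong law of large numbers applied entrywise yields $\frac{1}{k}\sum_{j=1}^{k} X_j X_j^T \to Q_*$ almost surely. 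By Assumption \ref{assumption:X spans Rn}, $Q_* \succ 0$, and since $\lambda_{\min}$ is continuous in the matrix entries, $\lambda_{\min}\left(\frac{1}{k}\sum_{j=1}^{k} X_j X_j^T\right) \to \lambda_{\min}(Q_*) > 0$. Hence $\lambda_{\min}\left(\sum_{j=1}^{k} X_j X_j^T\right) \to \infty$, so $\lambda_{\min}(M_k^{-1}) \to \infty$, i.e.\ $\lambda_{\max}(M_k) \to 0$, which is precisely $M_k \to 0$ almost surely.

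For the comparison, I would work on the full-measure event where $M_k \to 0$ and divide (\ref{eqn:covariance bounded by estimated covariance}) by $\sigma^2$, rewriting it as
\[
\frac{1}{\Delta^2} M_k (I - M_k) \preceq \frac{1}{\sigma^2} \mathcal{M}_k \preceq \frac{\norm{E_0}_2^2}{\sigma^2} M_k^2 + \frac{1}{\delta^2} M_k .
\]
The desired lower bound $\frac{1-\epsilon}{\Delta^2} M_k \preceq \frac{1}{\sigma^2}\mathcal{M}_k$ reduces to $M_k(\epsilon I - M_k) \succeq 0$, which holds as soon as $\lambda_{\max}(M_k) \leq \epsilon$. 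The desired upper bound $\frac{1}{\sigma^2}\mathcal{M}_k \preceq \frac{1+\epsilon}{\delta^2} M_k$ reduces to $M_k\left(\frac{\epsilon\sigma^2}{\delta^2 \norm{E_0}_2^2} I - M_k\right) \succeq 0$ (and is trivial when $E_0 = 0$), which holds as soon as $\lambda_{\max}(M_k) \leq \frac{\epsilon \sigma^2}{\delta^2 \norm{E_0}_2^2}$. Since $M_k \to 0$, both eigenvalue thresholds are eventually met, so taking $K$ large enough (depending on the realization and on $\epsilon$) gives the claimed sandwich for all $k \geq K$.

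The main obstacle is the first step: establishing that $M_k^{-1}$ diverges in the positive-definite sense rather than merely in trace. This is exactly where all three hypotheses are used together---the i.i.d.\ structure and $L^2$ integrability to invoke the strong law of large numbers, and the spanning condition of Assumption \ref{assumption:X spans Rn} to guarantee $Q_* \succ 0$, so that the \emph{smallest} eigenvalue of $\sum_j X_j X_j^T$, not just its trace, grows linearly in $k$. Once $M_k \to 0$ is in hand, the remaining estimates are elementary, since the quadratic $M_k^2$ correction terms in (\ref{eqn:covariance bounded by estimated covariance}) are dominated by the linear $M_k$ terms whenever $\lambda_{\max}(M_k)$ is small.
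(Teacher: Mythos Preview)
Your proof is correct, and the second half (deducing the sandwich from (\ref{eqn:covariance bounded by estimated covariance}) once $M_k\to 0$) matches the paper's argument almost verbatim---the paper also chooses $K$ so that $\lambda_{\max}(M_k)\le\min\{\epsilon,\,\sigma^2\epsilon/(\norm{E_0}^2\delta^2)\}$ and plugs into (\ref{eqn:covariance bounded by estimated covariance}).

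Where you genuinely diverge is in establishing $M_k\to 0$. The paper does \emph{not} invoke the strong law of large numbers. Instead it builds a renewal process: it defines stopping times $S_0=0$, $S_k=\min\{m>S_{k-1}:\mathrm{span}[X_{S_{k-1}+1},\ldots,X_m]=\mathbb{R}^n\}$, sets $\mathcal{X}_j=\sum_{s=S_j+1}^{S_{j+1}}X_sX_s^T$, shows (via an appendix of lemmas) that the $\mathcal{X}_j$ are i.i.d.\ with $\E{\lambda_{\min}(\mathcal{X}_j)}\ge\alpha>0$, and then applies the second Borel--Cantelli lemma to conclude $\lambda_{\min}(\mathcal{X}_j)\ge\alpha$ infinitely often, whence $\lambda_{\min}(M_{S_k}^{-1})\to\infty$ along the subsequence (and hence along the full sequence by monotonicity). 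Your route---entrywise SLLN on $X_jX_j^T$ (integrable by Cauchy--Schwarz under Assumption~\ref{assumption:X is in L2}), continuity of $\lambda_{\min}$, and $Q_*\succ 0$ from Assumption~\ref{assumption:X spans Rn}---is shorter, uses no auxiliary constructions, and in fact yields the sharper statement $\lambda_{\min}(M_k^{-1})\gtrsim k\lambda_{\min}(Q_*)/\Delta^2$. The paper's blocking argument, by contrast, is more robust to departures from the i.i.d.\ setting (it needs only that spanning blocks recur with positive probability, not a law of large numbers), and its machinery could in principle be reused if one weakened Assumption~\ref{assumption:model}; but under the stated hypotheses your argument is the cleaner one.
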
 
\begin{remark}
There is a conventional difference between ``almost surely there exists a $K$" and ``there exists a $K$ almost surely." The first statement implies that for each outcome, $\omega$, on a probability one set there is a $K(\omega)$. The latter statement implies $\exists K$ for all $\omega$. Thus, ``almost surely there exists a $K$" is a weaker result than ``there exists a $K$ almost surely", but it is stronger than convergence in probability.
\end{remark}
\begin{proof}
To show that $M_k \to 0$, it is equivalent to prove that $\lambda_{\max}(M_k)$, the maximum eigenvalue of $M_k$, goes to $0$. This is equivalent to showing that the minimum eigenvalue of $M_k^{-1}$, $\lambda_{\min}(M_k^{-1}) = \lambda_{\max}(M_k)^{-1}$, diverges to infinity. Moreover, by Lemma \ref{lemm:covRecursion} and the Courant-Fischer Principle (Ch. 4 in \cite{courant1953}), $\lambda_{\min}(M_{k}^{-1})$ is a non-decreasing sequence. Hence, it is sufficient to show that a subsequence diverges to infinity, which we will define using the following stochastic process. Define the stochastic process $\lbrace S_k: k+1 \in \mathbb{N} \rbrace$ by $S_0 = 0$ and 
\begin{equation}
\label{eqn:renewalProcess}
S_k = \min\left\lbrace m > S_{k-1}: span[X_{S_{k-1}+1},\ldots,X_{m}] = \mathbb{R}^n \right\rbrace
\end{equation}
We will now show that the sequence $\lbrace \lambda_{\min}(M_{S_{k}}^{-1}) \rbrace$ diverges. By Lemma \ref{lemm:covRecursion},
$$ M_{S_{k+1}}^{-1} = M_{S_k}^{-1}  + \sum_{s=S_k+1}^{S_{k+1}} \frac{1}{\gamma_s^2} X_s X_s^T \succeq M_{S_k}^{-1} + \frac{1}{\Delta^2} \sum_{s=S_{k}+1}^{S_{k+1}} X_s X_s^T \succeq I + \frac{1}{\Delta^2}\sum_{j=0}^k \mathcal{X}_j $$
where
\begin{equation}
\label{eqn:bulk XX transpose}
\mathcal{X}_k = \sum_{j=S_{k}+1}^{S_{k+1}} X_j X_j^T \quad \forall k+1 \in \mathbb{N}
\end{equation}
By the Courant-Fischer Principle,
\begin{equation}
\label{eqn:Subsequence Lower Eigenvalue Bound}
\lambda_{\min}(M_{S_{k+1}}^{-1}) \geq 1 + \frac{1}{\Delta^2} \sum_{j=0}^k \lambda_{\min}(\mathcal{X}_j)
\end{equation}
Thus, we are left with showing that $\sum_{j=0}^\infty \lambda_{\min}(\mathcal{X}_j)$ diverges almost surely. To this end, we will show that $\lambda_{\min}(\mathcal{X}_j)$ will be greater than some $\alpha > 0$ infinitely often. As a result, the sum must diverge to infinity. To show this, we will use a standard Borel-Cantelli argument (see Section 2.3 in \cite{durrett2010}). Consider the events $A_j = \lbrace \lambda_{\min}(\mathcal{X}_j) \geq \alpha \rbrace$ where the choice of $\alpha$ comes from Lemma \ref{lemm:lower bound on bulk XX' expected eigenvalues}, in which $\inf_{j} \E{\lambda_{\min}(\mathcal{X}_j)} \geq \alpha > 0$. For such an $\alpha$, $\Prb{A_j} > 0$ else we would have a contradiction. By Lemma \ref{lemm:iidmathcalXk}, we have that $A_j$ are independent and that $\Prb{A_j} = \Prb{A_0} > 0$ for all $j \in \mathbb{N}$. Thus by the (Second) Borel-Cantelli Lemma (Theorem 2.3.5, \cite{durrett2010}), 
$$ \sum_{j=0}^\infty \Prb{A_j} = \infty $$
Hence, $\lambda_{\min}(\mathcal{X}_j) \geq \alpha$ infinitely often, and thus $\lambda_{\min}(M_{S_{k+1}}^{-1}) \to \infty$ as $k \to \infty$ almost surely. Now, let $\epsilon > 0$, then almost surely there is a $K \in \mathbb{N}$ such that if $k \geq K$,
$$\lambda_{\max}(M_k) \leq \min \left\lbrace \epsilon, \frac{\sigma^2 \epsilon}{\norm{E_0}^2 \delta^2} \right\rbrace$$
Applying this to inequality (\ref{eqn:covariance bounded by estimated covariance}), we can conclude the result.
\qquad \end{proof}

As a simple corollary to Theorem \ref{thrm: M_k to 0, M_k behaves like true covariance}, we prove that $E_k$ and $e_k$ converge to $0$ in some sense.
\begin{corollary}
\label{cor:Parameter Errors Converge}
If Assumptions \ref{assumption:model}, \ref{assumption:X is in L2}, and \ref{assumption:X spans Rn} hold, and the tuning parameters satisfy (\ref{eqn:Condition on Tuning Parameters}), then
\begin{remunerate}
\item $\cond{\norm{E_k}_2}{\mathcal{F}_k} \to 0$ as $k \to \infty$ almost surely.
\item $\norm{e_k}_2 \to 0$ as $k \to \infty$ almost surely.
\end{remunerate}
\end{corollary}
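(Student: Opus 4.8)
The plan is to reduce both claims to the convergence of the conditional second moment $\tr{\mathcal{M}_k}$, which itself follows immediately from Theorem \ref{thrm: M_k to 0, M_k behaves like true covariance}. The key observation is that both $\cond{\norm{E_k}_2}{\mathcal{F}_k}$ and $\norm{e_k}_2$ are first-moment quantities dominated by $\tr{\mathcal{M}_k}$ via the (conditional) Jensen inequality, so once $\tr{\mathcal{M}_k}\to 0$ almost surely, both statements drop out.

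First I would establish that $\mathcal{M}_k \to 0$ almost surely. Theorem \ref{thrm: M_k to 0, M_k behaves like true covariance} gives $M_k \to 0$ almost surely, and inequality (\ref{eqn:covariance bounded by estimated covariance}) supplies the upper bound $\mathcal{M}_{k+1} \preceq \norm{E_0}_2^2 M_{k+1}^2 + \frac{\sigma^2}{\delta^2} M_{k+1}$, whose right-hand side vanishes as $M_{k+1}\to 0$. Since $\mathcal{M}_k \succeq 0$, this forces $\mathcal{M}_k \to 0$ in the positive-semidefinite order, and hence $\tr{\mathcal{M}_k}\to 0$ almost surely. Recalling from the text that $\tr{\mathcal{M}_k} = \cond{\norm{E_k}_2^2}{\mathcal{F}_k}$, this already says the conditional second moment of the strong error converges to zero.

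For the first claim I would apply the conditional Jensen inequality, using convexity of $t \mapsto t^2$, to obtain $\left( \cond{\norm{E_k}_2}{\mathcal{F}_k} \right)^2 \leq \cond{\norm{E_k}_2^2}{\mathcal{F}_k} = \tr{\mathcal{M}_k}$, so that $\cond{\norm{E_k}_2}{\mathcal{F}_k} \leq \sqrt{\tr{\mathcal{M}_k}} \to 0$ almost surely. For the second claim I would use $e_k = \cond{E_k}{\mathcal{F}_k}$ together with the conditional covariance decomposition $\mathcal{M}_k = e_k e_k^T + \cond{(E_k - e_k)(E_k - e_k)^T}{\mathcal{F}_k}$; since the conditional covariance term is positive semidefinite, $e_k e_k^T \preceq \mathcal{M}_k$, and therefore $\norm{e_k}_2^2 = \tr{e_k e_k^T} \leq \tr{\mathcal{M}_k} \to 0$ almost surely. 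Alternatively, $\norm{e_k}_2 = \norm{\cond{E_k}{\mathcal{F}_k}}_2 \leq \cond{\norm{E_k}_2}{\mathcal{F}_k}$ by conditional Jensen, and the first claim then finishes the job.

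Because the analytic content is entirely inherited from Theorem \ref{thrm: M_k to 0, M_k behaves like true covariance}, I do not expect a genuine obstacle here; the only points requiring care are bookkeeping ones. One must respect the ``almost surely there exists a $K$'' quantifier order inherited from the theorem, so that the exceptional null set and any threshold are allowed to depend on the sample path, but since the conclusions are themselves almost-sure limits this causes no difficulty. The other point is to invoke the conditional (rather than ordinary) Jensen inequality and the positive-semidefinite ordering consistently, so that the passage from the matrix statement $\mathcal{M}_k \to 0$ to the scalar statements about $\cond{\norm{E_k}_2}{\mathcal{F}_k}$ and $\norm{e_k}_2$ is rigorous.
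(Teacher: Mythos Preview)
Your proposal is correct and follows essentially the same route as the paper: bound $\cond{\norm{E_k}_2}{\mathcal{F}_k}^2 \leq \tr{\mathcal{M}_k}$ (the paper cites Cauchy--Schwarz, you cite conditional Jensen, same inequality), then use (\ref{eqn:covariance bounded by estimated covariance}) together with $M_k\to 0$ from Theorem~\ref{thrm: M_k to 0, M_k behaves like true covariance} to kill $\tr{\mathcal{M}_k}$, and finally deduce $\norm{e_k}_2 \leq \cond{\norm{E_k}_2}{\mathcal{F}_k}$ via Jensen. Your conditional-covariance decomposition for the second claim is a fine alternative but not needed, since your own Jensen remark already matches the paper's argument.
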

\begin{proof}
Note, by the Cauchy-Schwarz Inequality:
$$\cond{\norm{E_k}_2}{\mathcal{F}_k}^2 \leq \cond{\norm{E_k}_2^2}{\mathcal{F}_k} = \tr{\mathcal{M}_k}$$
By (\ref{eqn:covariance bounded by estimated covariance}) and Theorem \ref{thrm: M_k to 0, M_k behaves like true covariance}, $\tr{\mathcal{M}_k} \leq \norm{E_0}_2^2 \tr{M_k^2} + \frac{\sigma^2}{\delta^2}\tr{M_k} \to 0$ as $k \to \infty$ almost surely. By Jensen's Inequality, $\norm{e_k}_2 \leq \cond{\norm{E_k}_2}{\mathcal{F}_k}$ almost surely, thus, $\norm{e_k}_2 \to 0$ almost surely.
\qquad \end{proof}

\subsection{Insensitivity to Conditioning}
\label{subsec:Convergence of Objective Function}
The main result of this section, Theorem \ref{thrm:Q inv concentrates about M_k}, states that $M_k$ approximates a scaling of the inverse Hessian. As a consequence, Corollary \ref{cor:Convergence of Objective Function} states that kSGD's convergence rate does not depend on the conditioning of the Hessian, thereby addressing the second algorithmic challenge faced by proximal and SGD methods. Moreover, Corollary \ref{cor:Convergence of Objective Function} states that kSGD becomes arbitrarily close to the batch convergence rate in the limit (see \cite{murata1998}, Ch.5 in \cite{van2000}). 
\begin{theorem}
\label{thrm:Q inv concentrates about M_k}
If Assumptions \ref{assumption:model}, \ref{assumption:X is Linf}, and \ref{assumption:X spans Rn} hold, and $\gamma_k^2 = \gamma^2 \in (0, \infty)$ for all $k \in \mathbb{N}$, then for $\epsilon, \upsilon \in (0,1)$ the event
$$ \mathcal{E}_{k,\epsilon} = \left\lbrace \frac{\gamma^2 (1 - \epsilon)}{k} Q_*^{-1} \preceq M_k \preceq \frac{\gamma^2 (1 + \epsilon)}{k} Q_*^{-1} \right\rbrace$$ 
has probability at least $1 - \bigO{k^{-3 + \upsilon}}$.
\end{theorem}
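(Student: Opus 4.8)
The plan is to turn the two-sided Loewner bound defining $\mathcal{E}_{k,\epsilon}$ into a relative concentration statement for the empirical second-moment matrix, and then to prove that concentration by exploiting the boundedness supplied by Assumption~\ref{assumption:X is Linf}. Because $\gamma_k^2 = \gamma^2$ is constant and $M_0 = I$, Lemma~\ref{lemm:covRecursion} gives the exact identity $M_k^{-1} = I + \gamma^{-2}\sum_{j=1}^k X_j X_j^T$. Writing $S_k = \sum_{j=1}^k X_j X_j^T$ and inverting (which reverses the Loewner order), the event $\mathcal{E}_{k,\epsilon}$ is equivalent to
$$\frac{1}{1+\epsilon}Q_* \preceq \frac{\gamma^2}{k}I + \frac{1}{k}S_k \preceq \frac{1}{1-\epsilon}Q_*.$$
The deterministic term $\frac{\gamma^2}{k}I$ vanishes at rate $1/k$, while the relative slack $\epsilon$ leaves room of order one; hence for $k$ large it suffices to control the deviation of the empirical average $\bar S_k := \frac1k S_k$ from $Q_*$ in operator norm, the perturbation being absorbed into a slightly reduced tolerance.

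I would then whiten by conjugating with $Q_*^{-1/2}$, which exists and is invertible since $Q_* \succ 0$ by Assumption~\ref{assumption:X spans Rn}. Setting $W_j = Q_*^{-1/2} X_j$, the $W_j$ are i.i.d.\ with $\E{W_j W_j^T} = I$ and are uniformly bounded by Assumption~\ref{assumption:X is Linf}. The target reduces to the relative bound $(1-\epsilon')I \preceq \frac1k\sum_{j=1}^k W_j W_j^T \preceq (1+\epsilon')I$ for some $\epsilon'$ comparable to $\epsilon$, i.e.\ operator-norm concentration of the sample covariance of bounded, whitened data about the identity. Equivalently, I must bound $\Prb{\norm{\frac1k\sum_{j=1}^k (W_j W_j^T - I)}_2 > \epsilon'}$.

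To obtain the rate $\bigO{k^{-3+\upsilon}}$, I would reduce the operator norm to scalar deviations: controlling the (finitely many) entrywise averages $\frac1k\sum_j (W_j W_j^T - I)_{ab}$, or equivalently the quadratic forms $\frac1k\sum_j((W_j^T v)^2 - 1)$ over a fixed-resolution net of the unit sphere, suffices because $n$ is fixed, so $\norm{\cdot}_2$ is comparable to $\norm{\cdot}_F$ and the union bound over these directions costs only a constant factor. Each scalar summand is centered, i.i.d.\ and bounded, so a Rosenthal-type $2p$-th moment estimate yields a central moment of order $k^{-p}$, and Markov's inequality gives a per-direction tail of order $k^{-p}$; choosing $p$ near $3$ and spending the slack $\upsilon$ on the net resolution and the $\frac{\gamma^2}{k}I$ correction produces the stated exponent.

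The main obstacle I anticipate is this last step: pinning down the precise polynomial decay requires a careful higher-moment estimate for the centered bounded summands and bookkeeping of how the tolerance $\epsilon$, the net resolution, and the vanishing perturbation $\frac{\gamma^2}{k}I$ combine to give exactly $-3+\upsilon$ rather than a cruder rate. A secondary subtlety is that the claim is two-sided in the Loewner order, so the extreme eigenvalues of $\frac{\gamma^2}{k}I + \bar S_k$ must both be pinned to $Q_*$ at once; I would handle this uniformly by bounding $\norm{\bar S_k - Q_*}_2$ in operator norm, which dominates both one-sided deviations simultaneously.
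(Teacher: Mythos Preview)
Your proposal is correct and follows the same skeleton as the paper: use Lemma~\ref{lemm:covRecursion} to write $M_k^{-1}=I+\gamma^{-2}\sum_j X_jX_j^T$, recast $\mathcal{E}_{k,\epsilon}$ as concentration of the empirical second-moment matrix about $Q_*$, absorb the vanishing $\gamma^2 k^{-1}I$ perturbation into the slack, and control scalar deviations of bounded centered i.i.d.\ summands by a high-moment estimate plus Markov. The paper carries out that moment estimate explicitly as a sixth-moment combinatorial count (Lemma~\ref{lemm:If X L Infty and mean 0, the power of its sums are well controlled}) where you invoke Rosenthal; this is cosmetic.

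The one substantive technical difference is the reduction from the matrix event to scalars. You take the standard route of an $\epsilon$-net on the sphere (or entrywise control) together with a union bound, paying only a dimension-dependent constant since $n$ is fixed. The paper instead formulates its bad event as $\mathcal{B}_k=\{\bar{\mathcal{Y}}_k\preceq -\epsilon_k\gamma^2 I\}\cup\{\epsilon_k\gamma^2 I\preceq\bar{\mathcal{Y}}_k\}$ with $\bar{\mathcal{Y}}_k=\tfrac1k\sum_j(X_jX_j^T-Q_*)$, notes that on this event $|v^T\bar{\mathcal{Y}}_k v|\geq\epsilon_k\gamma^2$ for \emph{every} unit vector simultaneously, and therefore bounds $\Prb{\mathcal{B}_k}$ by the tail in a single arbitrary direction, dispensing with the net entirely. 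Your whitening by $Q_*^{-1/2}$ up front and the paper's conjugation by $Q_*^{1/2}$ at the end serve the same purpose. Both routes arrive at the stated polynomial rate; your net argument is more routine, while the paper's single-direction trick is slightly slicker but relies on the particular two-sided Loewner structure of $\mathcal{B}_k$.
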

\begin{proof}
Let $\lbrace \epsilon_k \rbrace$ be a non-negative sequence. Define $\mathcal{Y}_k = X_k X_k^T - Q_*$ and $\bar{\mathcal{Y}}_k = \frac{1}{k}\sum_{j=1}^k \mathcal{Y}_j$, and define the event
\begin{align*}
\mathcal{B}_{k} &= \left\lbrace M_{k}^{-1} \preceq \frac{k}{\gamma^2}Q_* + I(1 - \epsilon_k k) \right\rbrace \cup \left\lbrace \frac{k}{\gamma^2}Q_* + I(1 + \epsilon_k k) \preceq M_{k}^{-1} \right\rbrace \\
& = \left\lbrace \bar{\mathcal{Y}}_k \preceq -\epsilon_k \gamma^2 I \right\rbrace \cup \left\lbrace \epsilon_k \gamma^2 I \preceq \bar{\mathcal{Y}}_k \right\rbrace \tag{use Lemma \ref{lemm:covRecursion}} \\
&= \bigcap_{\norm{u}_2 = 1} \left\lbrace \left\vert u^T \bar{\mathcal{Y}}_k u \right\vert \geq \epsilon_k \gamma^2 \right\rbrace \\
&\subseteq \left\lbrace \left\vert v^T \bar{\mathcal{Y}} v \right\vert > \epsilon_k \gamma^2 \right \rbrace
\end{align*}
for an arbitrary unit vector $v \in \mathbb{R}^n$. Note that, by construction, $v^T \mathcal{Y}_j v$ are independent, mean zero random variables. By Lemma \ref{lemm:IfXLInfthenXXTisWellBounded}, $- nC^2 \leq v^T \mathcal{Y}_j v \leq nC^2$ almost surely, where $C = \norm{X}_\infty$. Therefore, by Markov's Inequality and Lemma \ref{lemm:If X L Infty and mean 0, the power of its sums are well controlled},
\begin{equation}
\label{eqn:Probability Bound on bad event}
\Prb{\mathcal{B}_k} \leq  \Prb{ \left\vert v^T \bar{\mathcal{Y}}_k v \right\vert > \epsilon_k \gamma^2} 
= \bigO{\frac{n^6C^{12}}{\epsilon_k^6 \gamma^{12} k^3}}
\end{equation}
We now turn to relating $\mathcal{E}_{k,\epsilon}$ to $\mathcal{B}_k$. Let $\tilde{M}_k = Q_*^{1/2}M_k Q_*^{1/2}$. Then,
\begin{align*}
&\mathcal{B}_k \\
 &= \left\lbrace M_{k}^{-1} \preceq \frac{k}{\gamma^2}Q_* + I(1 - \epsilon_k k) \right\rbrace \cup \left\lbrace \frac{k}{\gamma^2}Q_* + I(1 + \epsilon_k k) \preceq M_{k}^{-1} \right\rbrace \\
	&= \left\lbrace M_{k} \preceq \frac{\gamma^2}{k}\left(Q_* + I \left[ \frac{\gamma^2}{k} + \gamma^2\epsilon_k \right] \right)^{-1} \right\rbrace \cup \left\lbrace  \frac{\gamma^2}{k}\left(Q_* + I \left[ \frac{\gamma^2}{k} - \gamma^2\epsilon_k \right] \right)^{-1} \preceq M_k \right\rbrace \\
	&= \left\lbrace \tilde{M}_k \preceq \frac{\gamma^2}{k}\left(I + \gamma^2 Q_*^{-1} \left[ \frac{1}{k} + \epsilon_k \right] \right)^{-1} \right\rbrace \cup \left\lbrace  \frac{\gamma^2}{k}\left(I + \gamma^2 Q_*^{-1} \left[ \frac{1}{k} - \epsilon_k \right] \right)^{-1} \preceq \tilde{M}_k \right\rbrace \\
	&\supseteq \left\lbrace \tilde{M}_k \preceq \frac{\gamma^2}{k} \frac{\lambda_{\min}(Q_*)}{\lambda_{\min}(Q_*) + \gamma^2/k + \gamma^2 \epsilon_k} I \right\rbrace \cup \left\lbrace \frac{\gamma^2}{k} \frac{\lambda_{\max}(Q_*)}{\lambda_{\max}(Q_*) + \gamma^2/k - \gamma^2 \epsilon_k} I \preceq \tilde{M}_k \right\rbrace
\end{align*}
Now take $\epsilon_k = \frac{nC^2}{\gamma^2 k^\upsilon}$ where $\upsilon \in (0,1)$. Then, for every $\epsilon > 0$ there is a $K \in \mathbb{N}$ such that for $k \geq K$
$$ 1- \epsilon \leq \frac{\lambda_{\min}(Q_*)}{\lambda_{\min}(Q_*) + \gamma^2/k + \gamma^2 \epsilon_k} \leq  \frac{\lambda_{\max}(Q_*)}{\lambda_{\max}(Q_*) + \gamma^2/k - \gamma^2 \epsilon_k} \leq 1 + \epsilon $$
Therefore,
$$\mathcal{B}_k \supseteq \left\lbrace \tilde{M}_k \preceq \frac{\gamma^2(1-\epsilon)}{k} I \right\rbrace \cup \left\lbrace \frac{\gamma^2 (1 + \epsilon)}{k} I \preceq \tilde{M}_k \right\rbrace = \mathcal{E}_{k,\epsilon}^C$$
Using (\ref{eqn:Probability Bound on bad event}), $\Prb{\mathcal{E}_{k,\epsilon}} \geq 1 - \bigO{k^{-3+\upsilon}}$.
\qquad \end{proof}
\begin{remark}
Given Assumption \ref{assumption:X is Linf}, it is likely that with a bit more work the probability of $1 - \bigO{k^{-3 + \upsilon}}$ can be extended to some arbitrary $-A+\upsilon$ where $A \in \mathbb{N}_{> 3}$. Also, we can extend this result to a convergence in expectation by using the fact that $0 \preceq M_k \preceq I$.
\end{remark}
\begin{corollary}
\label{cor:Convergence of Objective Function}
If Assumptions \ref{assumption:model}, \ref{assumption:X is Linf}, and \ref{assumption:X spans Rn} hold, and $\gamma_k^2 = \gamma^2 \in (0,\infty)$ for all $k \in \mathbb{N}$, then for any $\epsilon,\upsilon \in (0,1)$ the event
$$ \frac{n\sigma^2(1-\epsilon)}{k} \leq \cond{D(\beta_k)}{\mathcal{F}_k} - D(\beta^*) \leq \frac{n\sigma^2(1 + \epsilon)}{k}$$
occurs with probability at least $1- \bigO{k^{-3+\upsilon}}$.
\end{corollary}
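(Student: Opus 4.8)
\section*{Proof proposal}

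The plan is to reduce the excess-objective bound to a single trace estimate on $\mathcal{M}_k$ and then import the matrix sandwich for $M_k$ supplied by Theorem \ref{thrm:Q inv concentrates about M_k}. Since $D$ is the quadratic $D(\beta) = D(\beta^*) + \frac{1}{2}(\beta-\beta^*)^T Q_*(\beta-\beta^*)$, writing $E_k = \beta_k - \beta^*$ gives $D(\beta_k) - D(\beta^*) = \frac{1}{2}\tr{Q_* E_k E_k^T}$. Taking the conditional expectation given $\mathcal{F}_k$ and recalling $\mathcal{M}_k = \cond{E_k E_k^T}{\mathcal{F}_k}$, the quantity to control is
$$ \cond{D(\beta_k)}{\mathcal{F}_k} - D(\beta^*) = \frac{1}{2}\tr{Q_* \mathcal{M}_k}. $$
Thus the whole corollary is a two-sided trace estimate on $\mathcal{M}_k$, and it suffices to pinch $\tr{Q_* \mathcal{M}_k}$ between $\frac{n\sigma^2(1\mp\epsilon)}{k}$ to leading order.

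Next I would eliminate $\mathcal{M}_k$ in favor of the estimated covariance $M_k$ using the exact identity of Lemma \ref{lemm:errorStrongRecursion}. Specializing to the constant tuning parameter $\gamma_j^2 = \gamma^2$ and substituting $\sum_{j=1}^{k}\gamma^{-2}X_j X_j^T = M_k^{-1} - I$ from Lemma \ref{lemm:covRecursion}, that identity collapses to
$$ \mathcal{M}_k = M_k E_0 E_0^T M_k + \frac{\sigma^2}{\gamma^2}\left( M_k - M_k^2 \right). $$
Here $\frac{\sigma^2}{\gamma^2}M_k$ is the dominant contribution, while $M_k E_0 E_0^T M_k$ and $\frac{\sigma^2}{\gamma^2}M_k^2$ are quadratic in $M_k$ and, as argued below, of lower order.

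I would then work on the event $\mathcal{E}_{k,\epsilon'}$ of Theorem \ref{thrm:Q inv concentrates about M_k} for a tolerance $\epsilon' \in (0,\epsilon)$ to be fixed. On this event $\frac{\gamma^2(1-\epsilon')}{k}Q_*^{-1} \preceq M_k \preceq \frac{\gamma^2(1+\epsilon')}{k}Q_*^{-1}$; since the map $A \mapsto \tr{Q_* A}$ is monotone on the positive semidefinite cone, tracing the sandwich gives $\frac{n\gamma^2(1-\epsilon')}{k} \leq \tr{Q_* M_k} \leq \frac{n\gamma^2(1+\epsilon')}{k}$, so the dominant term obeys $\frac{\sigma^2}{\gamma^2}\tr{Q_* M_k} \in \left[ \frac{n\sigma^2(1-\epsilon')}{k}, \frac{n\sigma^2(1+\epsilon')}{k} \right]$. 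For the two remaining terms, the same event forces $\lambda_{\max}(M_k) \leq \frac{\gamma^2(1+\epsilon')}{k}\lambda_{\max}(Q_*^{-1}) = \bigO{1/k}$, whence $\tr{Q_* M_k^2} \leq \lambda_{\max}(M_k)\tr{Q_* M_k} = \bigO{1/k^2}$ and $\tr{Q_* M_k E_0 E_0^T M_k} \leq \norm{E_0}_2^2\, \lambda_{\max}(Q_*)\, \lambda_{\max}(M_k)^2 = \bigO{1/k^2}$. Using that the $E_0$ term is nonnegative and the $-M_k^2$ term nonpositive, these $\bigO{1/k^2}$ corrections can be absorbed by choosing $\epsilon'$ strictly below $\epsilon$ and a threshold $K$ so that, for $k \geq K$, the full expression $\frac{1}{2}\tr{Q_* \mathcal{M}_k}$ is pinched between $\frac{n\sigma^2(1\mp\epsilon)}{2k}$ (the factor $\tfrac12$ coming from the quadratic objective and matching the classical batch excess-risk rate). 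Finally, since $\Prb{\mathcal{E}_{k,\epsilon'}} \geq 1 - \bigO{k^{-3+\upsilon}}$ and changing the tolerance from $\epsilon$ to $\epsilon'$ leaves the order in $k$ unchanged, the objective bound inherits probability at least $1 - \bigO{k^{-3+\upsilon}}$.

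I expect the main obstacle to be bookkeeping rather than conceptual: one must verify that both quadratic-in-$M_k$ terms are uniformly $\bigO{1/k^2}$ on $\mathcal{E}_{k,\epsilon'}$ and can be folded into the $(1\pm\epsilon)$ factor with a single threshold $K$, and that passing from the matrix sandwich to the scalar trace bounds loses nothing because $\tr{Q_* \cdot}$ is monotone on the positive semidefinite cone. A secondary point is the reconciliation of the arbitrary $\epsilon$ in the statement with the $\epsilon'$ used to define $\mathcal{E}_{k,\epsilon'}$: taking $\epsilon'$ strictly smaller than $\epsilon$ reserves exactly the slack needed to swallow the $\bigO{1/k^2}$ corrections while preserving the probability order $1 - \bigO{k^{-3+\upsilon}}$.
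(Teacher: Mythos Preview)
Your proposal is correct and proceeds along a slightly different route than the paper. The paper works via Theorem \ref{thrm: M_k to 0, M_k behaves like true covariance}: on the event $\mathcal{E}_{k,\epsilon'}$ (with $\epsilon' = \epsilon/4$) the uniform decay of $M_k$ upgrades the almost-sure sandwich $\frac{1-\epsilon'}{\gamma^2}M_k \preceq \frac{1}{\sigma^2}\mathcal{M}_k \preceq \frac{1+\epsilon'}{\gamma^2}M_k$ to a deterministic one, which is then chained with the sandwich for $M_k$ from Theorem \ref{thrm:Q inv concentrates about M_k} and traced against $Q_*$. You instead bypass Theorem \ref{thrm: M_k to 0, M_k behaves like true covariance} entirely: since $\gamma_k^2$ is constant, Lemma \ref{lemm:errorStrongRecursion} collapses to the exact identity $\mathcal{M}_k = M_k E_0 E_0^T M_k + \frac{\sigma^2}{\gamma^2}(M_k - M_k^2)$, and you estimate each of the three pieces directly on $\mathcal{E}_{k,\epsilon'}$. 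This is more elementary and arguably cleaner, as it avoids the detour through the almost-sure result and its strengthening; the price is only that you must handle the two $\bigO{1/k^2}$ correction terms by hand, which you do correctly. You also catch the factor $\tfrac{1}{2}$ from the definition of $D$ in (\ref{eqn:objective function}), which the paper's own proof silently drops; your version of the final bound is the one consistent with that definition.
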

\begin{proof}
Let $\epsilon' = \epsilon/4$. On the event $\mathcal{E}_{k,\epsilon'}$, we have a uniform bound on the rate at which $M_k \to 0$. Thus, on the event $\mathcal{E}_{k,\epsilon'}$, we can strengthen Theorem \ref{thrm: M_k to 0, M_k behaves like true covariance} to $\exists K \in \mathbb{N}$ almost surely such that for any $k \geq K$ 
$$ \frac{1 - \epsilon'}{\gamma^2} M_k \preceq \frac{1}{\sigma^2} \mathcal{M}_k \preceq \frac{1 + \epsilon'}{\gamma^2} M_k $$
Combining this with Theorem \ref{thrm:Q inv concentrates about M_k}, on the event $\mathcal{E}_{k,\epsilon'}$
$$ \frac{\gamma^2 (1 - \epsilon')^2}{k}Q_*^{-1} \preceq (1-\epsilon')M_k \preceq \frac{\gamma^2}{\sigma^2} \mathcal{M}_k \preceq (1+\epsilon')M_k \preceq \frac{\gamma^2 (1+\epsilon')^2}{k}Q_*^{-1}$$
Noting, $1 - \epsilon \leq (1- \epsilon')^2 \leq (1+ \epsilon')^2 \leq 1 + \epsilon$, the event
$$ \frac{\sigma^2 ( 1- \epsilon)}{k} I \preceq Q_*^{1/2}\mathcal{M}_kQ_*^{1/2} \preceq \frac{\sigma^2 (1+\epsilon)}{k}I$$
contains $\mathcal{E}_{k,\epsilon'}$. Now, using (\ref{eqn:objective function})
\begin{align*}
\cond{D(\beta_k)}{\mathcal{F}_k} - D(\beta^*) 
 &= \cond{(\beta_k - \beta^*)^T Q_* (\beta_k - \beta^*)}{\mathcal{F}_k}  \\
 &= \tr{ Q_*^{1/2} \cond{(\beta_k - \beta^*)(\beta_k-\beta^*)^T}{\mathcal{F}_k} Q_*^{1/2}} \\
 &= \tr{ Q_*^{1/2} \mathcal{M}_k Q_*^{1/2}}
\end{align*}
Therefore, on a set containing $\mathcal{E}_{k,\epsilon'}$,
$$ \frac{n\sigma^2 ( 1- \epsilon)}{k} \leq \cond{D(\beta_k)}{\mathcal{F}_k} - D(\beta^*) \leq \frac{n \sigma^2 (1+\epsilon)}{k}$$
\qquad \end{proof}

Now that kSGD has been shown to address the two algorithmic challenges faced by proximal and SGD methods, we turn our attention to carefully analyzing the effect that the tuning parameters have on convergence.

\subsection{Conditions on Tuning Parameters}
\label{subsec:Conditions on Tuning Parameters}
The tuning parameter condition, (\ref{eqn:Condition on Tuning Parameters}), raises two natural questions:
\begin{remunerate}
\item Is the tuning parameter condition, (\ref{eqn:Condition on Tuning Parameters}), the necessary condition on tuning parameters in order to guarantee convergence?
\item Given the wide range of possible tuning parameters, is there an optimal strategy for choosing the tuning parameters?
\end{remunerate}

As will be shown in Theorem \ref{thrm:Necessary TP Condition}, the first question can be answered negatively. For example, Theorem \ref{thrm:Necessary TP Condition} suggests that if the method converges the tuning parameters could have been $\gamma_k^2 = k^p$ for $p \in (0,1]$, which is an example not covered by condition (\ref{eqn:Condition on Tuning Parameters}).
\begin{theorem}
\label{thrm:Necessary TP Condition}
Suppose $\lbrace \gamma_k^2 \rbrace$ are selected apriori. If Assumptions \ref{assumption:model}, and \ref{assumption:X spans Rn} hold, $\E{\norm{X}_2^4} < \infty$, and for any $e_0$ we have that $e_k \to 0$ then $\sum_{k=1}^\infty \gamma_k^{-2}$ diverges almost surely.
\end{theorem}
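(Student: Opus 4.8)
The plan is to extract an exact formula for the weak error $e_k$ from Lemma \ref{lemm:errorStrongRecursion} and then argue by contradiction. First I would recall that Lemma \ref{lemm:errorStrongRecursion} gives $E_k = M_k\left(E_0 + \sum_{j=1}^k X_j \epsilon_j / \gamma_j^2\right)$. Since $M_k$ and each $X_j$ (for $j \le k$) are $\mathcal{F}_k$-measurable while the $\epsilon_j$ are mean-zero and independent of $\mathcal{F}_k$ by Assumption \ref{assumption:model}, taking the conditional expectation collapses the noise term and yields the clean identity
$$ e_k = \cond{E_k}{\mathcal{F}_k} = M_k E_0, $$
where $E_0 = e_0 = \beta_0 - \beta^*$ is the deterministic initialization error. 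This reduces the hypothesis ``$e_k \to 0$ for every $e_0$'' to the statement ``$M_k E_0 \to 0$ for every $E_0 \in \mathbb{R}^n$'', i.e. $M_k \to 0$ as a matrix (equivalently, since $M_k$ is symmetric positive definite by Lemma \ref{lemm:covRecursion}, $\lambda_{\min}(M_k^{-1}) \to \infty$).

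Next I would set up the contradiction: suppose $\sum_{k=1}^\infty \gamma_k^{-2} < \infty$. By Lemma \ref{lemm:covRecursion}, $M_k^{-1} = I + \sum_{j=1}^k \gamma_j^{-2} X_j X_j^T$, so
$$ \tr{M_k^{-1}} = n + \sum_{j=1}^k \frac{\norm{X_j}_2^2}{\gamma_j^2}. $$
Since $\E{\norm{X}_2^4} < \infty$ gives $\E{\norm{X}_2^2} < \infty$, and the $X_j$ are identically distributed, Tonelli's theorem yields $\E{\sum_{j=1}^\infty \gamma_j^{-2}\norm{X_j}_2^2} = \E{\norm{X}_2^2}\sum_{j=1}^\infty \gamma_j^{-2} < \infty$, so the nondecreasing sequence $\tr{M_k^{-1}}$ converges to a finite limit $B < \infty$ almost surely.

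Finally, on the almost sure event $\lbrace B < \infty \rbrace$, the bound $\lambda_{\max}(M_k^{-1}) \le \tr{M_k^{-1}} \le B$ gives $\lambda_{\min}(M_k) \ge 1/B > 0$ uniformly in $k$, whence for any fixed $E_0 \neq 0$ we obtain $\norm{e_k}_2 = \norm{M_k E_0}_2 \ge \lambda_{\min}(M_k)\norm{E_0}_2 \ge \norm{E_0}_2/B$, bounded away from zero. This contradicts $e_k \to 0$, so $\sum_{k=1}^\infty \gamma_k^{-2}$ must diverge.

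I expect the main subtlety to be organizational rather than analytical: pinning down $e_k = M_k E_0$ with correct measurability bookkeeping, and ensuring the almost sure finiteness of $B$ delivers a genuinely uniform-in-$k$ lower bound on $\lambda_{\min}(M_k)$ along each fixed sample path, so that the contradiction with $e_k \to 0$ holds pathwise rather than merely in probability. I would also note that only the second moment $\E{\norm{X}_2^2} < \infty$ is actually used in this route; the stated fourth-moment hypothesis is stronger than needed for the argument as sketched.
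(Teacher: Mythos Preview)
Your proposal is correct and follows the same skeleton as the paper: derive $e_k = M_k e_0$ (the paper obtains this via the recursion $e_{k+1}=M_{k+1}M_k^{-1}e_k$ from (\ref{eqn:Strong Error Recursion with Noise}), you via Lemma~\ref{lemm:errorStrongRecursion} and conditioning), reduce to $\lambda_{\max}(M_k)\to 0$, compute $\tr{M_k^{-1}}=n+\sum_j \gamma_j^{-2}\norm{X_j}_2^2$, and argue that summability of $\gamma_j^{-2}$ forces this trace to remain bounded, giving the contradiction.

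The one genuine difference is in how you establish $\sup_k \tr{M_k^{-1}}<\infty$ almost surely. The paper invokes Kolmogorov's Three Series Theorem, and it is precisely the variance condition there that consumes the fourth-moment hypothesis $\E{\norm{X}_2^4}<\infty$. Your route---Tonelli on the nonnegative series $\sum_j \gamma_j^{-2}\norm{X_j}_2^2$ to get finite expectation, hence almost sure finiteness---is more elementary and, as you observe, needs only $\E{\norm{X}_2^2}<\infty$. So your argument is strictly sharper in its moment requirement while being shorter; the paper's use of the Three Series Theorem is more machinery than the situation demands.
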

\begin{proof}
Using (\ref{eqn:Strong Error Recursion with Noise}), premultiplying $E_k$ by $M_{k}M_{k}^{-1}$, and taking conditional expectation with respect to $\mathcal{F}_{k+1}$:
$$ e_{k+1} = M_{k+1} M_k^{-1} e_k$$
Repeatedly applying the recursion, $e_{k+1} = M_{k+1} e_0$. Thus, $e_{k+1} \to 0$ for any $e_0$ is equivalent to $\lambda_{\max}(M_k) \to 0$. Note:
$$ \lambda_{\max}(M_k) = \frac{1}{\lambda_{\min}(M_k^{-1})} \geq \frac{1}{\tr{M_k^{-1}}}$$
We will prove that if $\sum_{j=1}^\infty \gamma_j^{-2} < \infty$ then the supremum of the trace of $M_{k}^{-1}$ is finite, and therefore $\lambda_{\max}(M_k) > 0$, which gives a contradiction. That is, we will show, by using Lemma \ref{lemm:covRecursion}, the supremum over all k of 
$$ \tr{M_{k}^{-1}} = \tr{ I + \sum_{j=1}^k \frac{1}{\gamma_{j}^2} X_j X_j^T} = n + \sum_{j=1}^k \frac{\norm{X_j}_2^2}{\gamma_j^2}$$
is almost surely finite. The main tool used is Kolmogorov's Three Series Theorem (Theorem 2.5.4 in \cite{durrett2010}). Suppose that $\sum_{k=1}^\infty \gamma_j^{-2} < \infty$, and let $A > 0$. By Markov's Inequality,
\begin{align*}
\sum_{j=1}^\infty \Prb{\norm{X_j}_2^2 > A \gamma_j^2} &\leq \sum_{j=1}^\infty \frac{\E{\norm{X_j}_2^2}}{\gamma_j^2 A} < \infty \\
\sum_{j=1}^\infty \frac{\E{\norm{X_j}_2^2 \1{\norm{X_j}_2^2 \leq A \gamma_j^2}}}{\gamma_j^2} &\leq \sum_{j=1}^\infty \frac{\E{\norm{X_j}_2^2}}{\gamma_j^2} < \infty \\
\sum_{j=1}^\infty \frac{ var\left(\norm{X_j}_2^2 \1{\norm{X_j}_2^2 \leq A \gamma_j^2}\right)}{\gamma_j^4} &\leq \sum_{j=1}^\infty \frac{\E{\norm{X_j}_2^4}}{\gamma_j^4} < \infty
\end{align*}
Thus, by Kolmogorov's Three Series Theorem, $\sup_{k} \tr{M_{k}^{-1}} < \infty$. Hence, $e_k \not\to 0$ and we have a contradiction.
\qquad \end{proof}
\begin{remark}
Using a condition such as $\sum_{k=1}^\infty \gamma_{k}^{-2}$ diverges or the sum over any subsequence diverges in place of (\ref{eqn:Condition on Tuning Parameters}) seems appealing. Although $\mathcal{M}_k$ can be upper bounded by $M_k$ using such a condition and Lemma \ref{lemm:errorStrongRecursion}, the main theoretical difficulty occurs in proving that the $\sum_{k=1}^\infty \Prb{A_k} = \infty$ in the proof of Theorem \ref{thrm: M_k to 0, M_k behaves like true covariance}.
\end{remark}

The second question can be understood by examining the two roles tuning parameters have in Theorem \ref{thrm: M_k to 0, M_k behaves like true covariance}. First, the tuning parameters determine how quickly $M_k^{-1}$ diverges: if the tuning parameters take on very small values, then, in light of (\ref{eqn:Subsequence Lower Eigenvalue Bound}), $\lambda_{\min}(M_k^{-1})$ will diverge quickly. Therefore, the tuning parameters should be selected to be a fixed small value when the goal is to converge quickly. Second, the tuning parameter bounds, $\delta^2$ and $\Delta^2$, determine how tightly $M_k$ bounds $\mathcal{M}_k$: if $\delta^2$ and $\Delta^2$ are close to $\sigma^2$ then $M_k$ will have a tight lower and upper bound on $\mathcal{M}_k$. Therefore, from an algorithmic perspective, if the tuning parameter bounds are close to $\sigma^2$, then $M_k$ will be a better stop condition.

In the case when $\sigma^2$ is of moderate size, the tuning parameters can be selected to be small thereby ensuring that fast convergence is achieved and that $M_k$ is a strong stop condition. On the other hand, if $\sigma^2$ is large, the tuning parameters cannot satisfy both fast convergence and ensure that $M_k$ is a strong stop condition. These opposing tuning parameter goals can be reconciled with the following result.
\begin{theorem}
\label{thrm:M_k estimates covariance better with adaptive tuning parameter}
Suppose Assumptions \ref{assumption:model}, \ref{assumption:X is in L2}, and \ref{assumption:X spans Rn} hold, and $\sigma^2 \in (\delta^2,\Delta^2)$ for some $\Delta^2 \geq \delta^2 > 0$. If there exists a sequence of tuning parameters satisfying (\ref{eqn:Condition on Tuning Parameters}) and satisfying for any $\epsilon \in (0,1)$ almost surely $\exists K \in \mathbb{N}$ such that for $k \geq K$, $|\gamma_k^2 - \sigma^2| < \epsilon \sigma^2$ then almost surely there exists a $K' \in \mathbb{N}$ such that for $k \geq K'$
$$ \mathcal{M}_k \preceq \frac{1+\epsilon}{1-\epsilon} M_k $$
\end{theorem}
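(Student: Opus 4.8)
The plan is to start from the closed-form expression for $\mathcal{M}_k$ furnished by Lemma \ref{lemm:errorStrongRecursion}, namely $\mathcal{M}_{k} = M_{k}E_0E_0^T M_{k} + M_{k}\left(\sum_{j=1}^{k}\frac{\sigma^2}{\gamma_j^2}\frac{1}{\gamma_j^2}X_jX_j^T\right)M_{k}$, and to show its right-hand side is dominated by $\frac{1+\epsilon}{1-\epsilon}M_k$ once $k$ is large. Fix $\epsilon \in (0,1)$. By hypothesis, almost surely there is a (random) $K$ so that $|\gamma_j^2 - \sigma^2| < \epsilon\sigma^2$, equivalently $\frac{\sigma^2}{\gamma_j^2} < \frac{1}{1-\epsilon}$, for every $j \geq K$; this is the regime in which each weight $\frac{\sigma^2}{\gamma_j^2}$ appearing in the sum is pinned near one, which is what lets the main term look like $M_k(M_k^{-1}-I)M_k$.

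Next I would split the sum at $K$. The late block obeys $\sum_{j=K}^k \frac{\sigma^2}{\gamma_j^2}\frac{1}{\gamma_j^2}X_jX_j^T \preceq \frac{1}{1-\epsilon}\sum_{j=K}^k\frac{1}{\gamma_j^2}X_jX_j^T \preceq \frac{1}{1-\epsilon}\sum_{j=1}^k \frac{1}{\gamma_j^2}X_jX_j^T = \frac{1}{1-\epsilon}(M_k^{-1}-I)$, using positivity of each summand together with the identity $M_k^{-1}=I+\sum_{j=1}^k\frac{1}{\gamma_j^2}X_jX_j^T$ from Lemma \ref{lemm:covRecursion}. Conjugating by $M_k$ turns this into $\frac{1}{1-\epsilon}(M_k - M_k^2) \preceq \frac{1}{1-\epsilon}M_k$. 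The early block $S_{K-1} := \sum_{j=1}^{K-1}\frac{\sigma^2}{\gamma_j^4}X_jX_j^T$ is a fixed, finite (though random) positive semidefinite matrix once $K$ is frozen, so $M_k S_{K-1}M_k \preceq \lambda_{\max}(S_{K-1})M_k^2$, and likewise $M_k E_0E_0^T M_k \preceq \norm{E_0}_2^2 M_k^2$.

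Finally I would collapse the two ``negligible'' contributions using $M_k \to 0$. Since $M_k \preceq \lambda_{\max}(M_k)I$ and $M_k \succeq 0$ we have $M_k^2 \preceq \lambda_{\max}(M_k)M_k$, whence $\mathcal{M}_k \preceq \left(\norm{E_0}_2^2 + \lambda_{\max}(S_{K-1})\right)\lambda_{\max}(M_k)M_k + \frac{1}{1-\epsilon}M_k$. Theorem \ref{thrm: M_k to 0, M_k behaves like true covariance} gives $\lambda_{\max}(M_k)\to 0$ almost surely, so almost surely there is a $K' \geq K$ beyond which $\left(\norm{E_0}_2^2 + \lambda_{\max}(S_{K-1})\right)\lambda_{\max}(M_k) \leq \frac{\epsilon}{1-\epsilon}$; adding this to $\frac{1}{1-\epsilon}$ produces exactly the claimed factor $\frac{1+\epsilon}{1-\epsilon}$. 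The one delicate point is precisely this last absorption: the early sum $S_{K-1}$ carries the possibly large weights $\frac{\sigma^2}{\gamma_j^4}$ from before the tuning parameters settled near $\sigma^2$, and the argument works only because $S_{K-1}$ stops growing once $K$ is fixed while $M_k$ keeps shrinking to zero, so that $M_k S_{K-1}M_k$ is $o(M_k)$. I would also record that both $K$ and $K'$ are $\omega$-dependent, which is consistent with the ``almost surely there exists a $K'$'' phrasing, since we work on the intersection of the probability-one events coming from the hypothesis and from Theorem \ref{thrm: M_k to 0, M_k behaves like true covariance}.
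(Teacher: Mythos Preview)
Your proposal is correct and follows essentially the same route as the paper: start from the closed form $\mathcal{M}_k = M_k E_0E_0^T M_k + M_k\bigl(\sum_j \frac{\sigma^2}{\gamma_j^4}X_jX_j^T\bigr)M_k$, split the sum at the random index $K$, bound the late block by $\frac{1}{1-\epsilon}(M_k^{-1}-I)$ via Lemma~\ref{lemm:covRecursion}, and absorb the fixed early block together with the initial-error term into $\frac{\epsilon}{1-\epsilon}M_k$ using $M_k\to 0$ from Theorem~\ref{thrm: M_k to 0, M_k behaves like true covariance}. The only cosmetic difference is that the paper rewrites the early block as $\bigl(\frac{\sigma^2}{\delta^2}-1\bigr)M_k M_{K-1}^{-1}M_k$ rather than your $\lambda_{\max}(S_{K-1})M_k^2$, but both are ``fixed matrix sandwiched by $M_k$'' and vanish for the same reason.
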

\begin{proof}
Let $\epsilon \in (0,1)$. Then by assumption, almost surely there exists a $K \in \mathbb{N}$ for which $\gamma_k^2 \geq \sigma^2(1-\epsilon)$. From Lemma \ref{lemm:errorStrongRecursion} with $k \geq K$:
\begin{align*}
\mathcal{M}_{k} &= M_{k} E_0 E_0^T M_{k} + M_{k}\left( \sum_{j=1}^{k} \frac{\sigma^2}{\gamma_j^2} \frac{1}{\gamma_j^2}X_jX_j^T\right)M_{k} \\
&=M_{k}E_0 E_0^T M_{k} 
	+ M_{k}\left( \sum_{j=1}^{K-1} \left(\frac{\sigma^2}{\gamma_j^2} - 1\right)\frac{1}{\gamma_j^2} X_j X_j^T \right) M_k  \\
	&\quad + M_k\left(\sum_{j=1}^{K-1} \frac{1}{\gamma_j^2} X_j X_j^T + \sum_{j=K}^{k} \frac{\sigma^2}{\gamma_j^2} \frac{1}{\gamma_j^2} X_j X_j^T \right) M_k \\
&\preceq M_k^2 \norm{E_0}_2^2 + \left(\frac{\sigma^2}{\delta^2} - 1\right)M_{k}M_{K-1}^{-1}M_{k} \tag{$\sigma^2 \geq \delta^2$ by Assumption}\\
&\quad + M_k \left(\sum_{j=1}^{K-1} \frac{1}{\gamma_j^2} X_j X_j^T + \frac{1}{1-\epsilon}\sum_{j=K}^k \frac{1}{\gamma_j^2} X_j X_j^T \right) M_k \\
&\preceq M_k^2 \norm{E_0}_2^2 + \left(\frac{\sigma^2}{\delta^2} - 1\right)M_{k}M_{K-1}^{-1}M_{k} + \frac{1}{1-\epsilon} M_k
\end{align*}
By Theorem \ref{thrm: M_k to 0, M_k behaves like true covariance}, $M_k \to 0$ almost surely. Thus, almost surely there is a $K' \in \mathbb{N}$, which we can take larger than $K$, such that if $k \geq K'$ then
$$ M_k^2 \norm{E_0}_2^2 + \left(\frac{\sigma^2}{\delta^2} - 1\right)M_{k}M_{K-1}^{-1}M_{k} \preceq \frac{\epsilon}{1-\epsilon} M_k $$
Combining the inequalities gives the result.
\qquad \end{proof}

The construction of such a tuning parameter strategy is quite difficult because a sequence which almost surely converges to $\sigma^2$ will never be known apriori. In practice, the construction of $\gamma_k^2$ will then have to depend on the data $(X_1,Y_1),(X_2,Y_2),\ldots$; therefore, a data dependent tuning parameter strategy will introduce measurability issues in Theorem \ref{thrm:M_k estimates covariance better with adaptive tuning parameter}. Thus, we only use Theorem \ref{thrm:M_k estimates covariance better with adaptive tuning parameter} as motivation for constructing a tuning parameter strategy which estimates $\sigma^2$. The construction of such tuning parameters is the content of the next subsection.
 
\subsection{Adaptive Choice of Tuning Parameters}
\label{subsec:Adaptive Choice of Tuning Parameters}
We will define a sequence of tuning parameters $\lbrace \gamma_k^2 \rbrace$ which satisfy condition (\ref{eqn:Condition on Tuning Parameters}) and incrementally estimate $\sigma^2$ when $\sigma^2 \in (\delta^2, \Delta^2)$. This choice of tuning parameters will be determined by a two-step process:
\begin{remunerate}
\item By defining a sequence of unbounded estimators $\lbrace \xi_k^2 \rbrace$ which converge to $\sigma^2$ in some sense.
\item Then, by defining $\gamma_k^2 = \phi_\tau(\xi_k^2)$ for a $\tau \in \mathbb{N}$ where
\begin{equation}
\label{eqn:truncation}
\phi_{\tau}(x) = \tau \1{ x > \tau} + 
\tau^{-1} \1{ x < \tau^{-1}} +
x \1{ \tau^{-1} \leq x \leq \tau}
\end{equation}
to ensure that condition (\ref{eqn:Condition on Tuning Parameters}) is satisfied.
\end{remunerate}
\begin{remark}
The choice of $\phi_\tau$ imposes $\delta^2 = \tau^{-1}$ and $\Delta^2 = \tau$ in (\ref{eqn:Condition on Tuning Parameters}) The use of a single parameter, $\tau$, for the bounds is strictly a matter of convenience. Using different lower and upper bounds is completely satisfactory from a theoretical perspective and can be a strategy to negotiate the trade-offs in choosing the tuning parameters. We state this case in \S \ref{sec:Algorithms}.
\end{remark}

We will use the following general form for estimators $\lbrace \xi_k^2 \rbrace$:
\begin{equation}
\label{eqn:XiUpdateForget}
 \xi_1^2 = r_1^2 \quad\text{and}\quad \xi_{k+1}^2 = \frac{1}{k+1}f_{k+1}r_{k+1}^2 + \left(1-\frac{1}{k+1}\right) \xi_{k}^2
\end{equation}
where, the residuals, $r_k$, satisfy $r_k = Y_k - X_k^T \beta_{k-1}$, and the hyperparameters, $f_k$, are non-negative. One natural choice for the hyperparameters is $f_k = 1$ for all $k$. Indeed, such a choice has the following nice theoretical guarantee, which is a consequence of Proposition \ref{prop:convergence Xi and Gamma with Forgetting} below.
\begin{corollary}
\label{cor:convergence Xi and Gamma no Forgetting}
If Assumptions \ref{assumption:model}, \ref{assumption:X is in L2}, and \ref{assumption:X spans Rn} hold and $f_k = 1$ for all $k$ then $ \lim_{k \to \infty} \E{\left. \xi_{k}^2 \right\vert \mathcal{F}_k} = \sigma^2$ almost surely. Moreover, almost surely:
$$ \phi_{\tau}(\sigma^2) \leq \liminf_{k \to \infty} \E{\left. \gamma_k^2 \right\vert \mathcal{F}_k} \leq \limsup_{k \to \infty} \E{\left. \gamma_k^2 \right\vert \mathcal{F}_k}\leq \phi_{\tau}(\sigma^2 + \tau^{-1} )$$
\end{corollary}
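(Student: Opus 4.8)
The plan is to treat this as the $f_k \equiv 1$ specialization of the forthcoming Proposition \ref{prop:convergence Xi and Gamma with Forgetting}; with $f_k = 1$ the recursion (\ref{eqn:XiUpdateForget}) telescopes to the plain running average $\xi_k^2 = \frac{1}{k}\sum_{j=1}^k r_j^2$, which is the unbiased sample analogue of $\sigma^2$. The substantive step is the first claim, $\cond{\xi_k^2}{\mathcal{F}_k} \to \sigma^2$. First I would substitute the model, writing each residual as $r_j = \epsilon_j - X_j^T E_{j-1}$, so that $r_j^2 = \epsilon_j^2 - 2\epsilon_j X_j^T E_{j-1} + (X_j^T E_{j-1})^2$. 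Conditioning on $\mathcal{F}_k = \sigma(X_1,\dots,X_k)$ for $j \le k$: since $E_{j-1}$ is measurable with respect to $\sigma(X_1,\dots,X_{j-1},\epsilon_1,\dots,\epsilon_{j-1})$ (note that each data-dependent $\gamma_i^2$, $i \le j-1$, involves only $\epsilon_1,\dots,\epsilon_i$), and $\epsilon_j$ is independent of that $\sigma$-algebra and of $\mathcal{F}_k$ with mean zero and variance $\sigma^2$, the cross term vanishes while $\cond{\epsilon_j^2}{\mathcal{F}_k} = \sigma^2$. Moreover $\cond{E_{j-1}E_{j-1}^T}{\mathcal{F}_k} = \cond{E_{j-1}E_{j-1}^T}{\mathcal{F}_{j-1}} = \mathcal{M}_{j-1}$, because the extra information $X_j,\dots,X_k$ is independent of $E_{j-1}$ given $\mathcal{F}_{j-1}$. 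This yields the key identity $\cond{\xi_k^2}{\mathcal{F}_k} = \sigma^2 + \frac{1}{k}\sum_{j=1}^k X_j^T \mathcal{M}_{j-1} X_j$.

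It then remains to show the remainder $\frac{1}{k}\sum_{j=1}^k X_j^T\mathcal{M}_{j-1}X_j \to 0$ almost surely. I would decompose it into its predictable compensator and a martingale piece: since $X_j$ is independent of $\mathcal{F}_{j-1}$ and $\mathcal{M}_{j-1}$ is $\mathcal{F}_{j-1}$-measurable, $\cond{X_j^T\mathcal{M}_{j-1}X_j}{\mathcal{F}_{j-1}} = \tr{Q_* \mathcal{M}_{j-1}} \le \lambda_{\max}(Q_*)\tr{\mathcal{M}_{j-1}}$, and Theorem \ref{thrm: M_k to 0, M_k behaves like true covariance} together with (\ref{eqn:covariance bounded by estimated covariance}) gives $\tr{\mathcal{M}_{j-1}} \to 0$ almost surely, so the Cesàro average of the compensator vanishes. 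The fluctuation $\frac{1}{k}\sum_{j=1}^k(X_j^T\mathcal{M}_{j-1}X_j - \tr{Q_*\mathcal{M}_{j-1}})$ is then handled by a martingale strong law. I expect this remainder step to be the main obstacle: under Assumption \ref{assumption:X is in L2} alone we lack the fourth moments of $X$ needed for a direct conditional-variance bound, so the argument must instead use that $X_j^T\mathcal{M}_{j-1}X_j$ is dominated by $c\, X_j^T M_{j-1}X_j$ (from (\ref{eqn:covariance bounded by estimated covariance})) and that $\sum_{j=1}^k \log\big(1 + \gamma_j^{-2}X_j^T M_{j-1}X_j\big) = \log\det M_k^{-1} = \bigO{\log k}$ almost surely (using $\tr{M_k^{-1}} = \bigO{k}$ via the strong law for $\norm{X_j}_2^2$), forcing $\sum_{j=1}^k X_j^T M_{j-1}X_j$ to grow sub-linearly; the delicacy is that the tuning parameters are themselves data dependent, reintroducing the measurability subtleties already flagged for Theorem \ref{thrm:M_k estimates covariance better with adaptive tuning parameter}.

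Finally, for the bounds on $\cond{\gamma_k^2}{\mathcal{F}_k} = \cond{\phi_\tau(\xi_k^2)}{\mathcal{F}_k}$ I would use elementary properties of the clamp $\phi_\tau$. For the upper bound, $\phi_\tau(x) \le \min(x,\tau) + \tau^{-1}$ and $\phi_\tau(x) \le \tau$ for $x \ge 0$; taking conditional expectations, applying Jensen to the concave $\min(\cdot,\tau)$, and passing to the limit using the first claim gives $\limsup_k \cond{\gamma_k^2}{\mathcal{F}_k} \le \min(\min(\sigma^2,\tau)+\tau^{-1},\tau) = \phi_\tau(\sigma^2 + \tau^{-1})$. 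For the lower bound I would use the identity $\phi_\tau(x) = \max(x,\tau^{-1}) - (x-\tau)_+$, bounding $\cond{\max(\xi_k^2,\tau^{-1})}{\mathcal{F}_k} \ge \max(\cond{\xi_k^2}{\mathcal{F}_k},\tau^{-1})$ by Jensen (convexity) while showing $\cond{(\xi_k^2-\tau)_+}{\mathcal{F}_k}$ does not exceed $(\sigma^2-\tau)_+$ in the limit. This last estimate is where conditional-mean convergence alone is insufficient, since it cannot reach $\phi_\tau(\sigma^2)$ across the non-convex kink of $\phi_\tau$; it instead requires the concentration of $\xi_k^2$ about its conditional mean that Proposition \ref{prop:convergence Xi and Gamma with Forgetting} supplies, which is the second point I would expect to demand care.
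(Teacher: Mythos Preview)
Your overall strategy---specializing Proposition~\ref{prop:convergence Xi and Gamma with Forgetting} with $f_k\equiv 1$ and $V=0$---is exactly what the paper does, and your derivation of the key identity
\[
\cond{\xi_k^2}{\mathcal{F}_k}=\sigma^2+\frac{1}{k}\sum_{j=1}^k X_j^T\mathcal{M}_{j-1}X_j
\]
matches the paper's computation. Your treatment of the clamp $\phi_\tau$ for the $\gamma_k^2$ bounds is also essentially the paper's (the paper sandwiches $\phi_\tau$ between the one-sided truncations $\nu_\tau$ and $\psi_\tau$ and, like you, singles out the need to control $\cond{(\xi_k^2-\tau)_+}{\mathcal{F}_k}$ via $\cond{|\xi_k^2-\sigma^2|}{\mathcal{F}_k}\to 0$, not merely the conditional mean).

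Where you diverge is the remainder term $\frac{1}{k}\sum_j X_j^T\mathcal{M}_{j-1}X_j$, and here your route is both more complicated than necessary and contains a gap. The paper does not use a compensator/martingale split or a $\log\det$ identity at all. It simply invokes Theorem~\ref{thrm: M_k to 0, M_k behaves like true covariance} to get, almost surely for $j\geq K$, $\mathcal{M}_{j-1}\preceq c\,M_{j-1}$ and $\lambda_{\max}(M_{j-1})\leq \epsilon'$, so that $X_j^T\mathcal{M}_{j-1}X_j\leq c\,\epsilon'\,\norm{X_j}_2^2$; then the strong law for $\norm{X_j}_2^2$ (valid under Assumption~\ref{assumption:X is in L2} alone) makes the Ces\`aro average arbitrarily small. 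This avoids fourth moments entirely. Your $\log\det$ step, by contrast, does not close: $\sum_{j\leq k}\log(1+a_j)=\bigO{\log k}$ does \emph{not} force $\sum_{j\leq k}a_j=o(k)$ when the $a_j$ are unbounded (a single term of size $k$ already contributes $\log(1+k)$ to the left side but $k$ to the right), and under only $X\in L^2$ the quantities $\gamma_j^{-2}X_j^TM_{j-1}X_j$ are not almost surely bounded. So the sublinearity claim you need from the $\log\det$ identity does not follow. Replacing that paragraph with the $\lambda_{\max}(M_{j-1})\norm{X_j}_2^2$ bound and the strong law both fixes the gap and removes the martingale machinery.
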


Although this result calls into question why $f_k$ are even considered in (\ref{eqn:XiUpdateForget}), it is misleading: if the initial residuals, $r_1^2,r_2^2,\ldots$, deviate from $\sigma^2$ significantly, then a large number of cases must be assimilated in order for the estimator of $\xi_k^2$ to converge to $\sigma^2$. A common strategy to overcome this is to shrink the initial residuals --- similarly, introduce forgetting factors --- which converge to unity. That is, we consider a positive sequence $f_k \leq 1$ such that $f_k \to 1$. However, because $f_k$ can be designed, this procedure begs the question of how and when $f_k$ should approach $1$. Indeed, the only guidance which can be given on this choice is to make $f_k$ depend on $M_k$: once $M_k$ is sufficiently small, we have an assurance that $r_k^2$ should be faithful estimators of $\sigma^2$, and so $f_k$ should be near $1$ in this regime. Since $M_k$ are measurable with respect to $\mathcal{F}_k$, a more flexible condition is to allow $f_k$ to be measurable with respect to $\mathcal{F}_k$ as well.
\begin{remark}
Despite restricting $f_k$ to be $\mathcal{F}_k$ measurable, there are still many strategies for choosing $f_k$. For example, $f_k$ can be set using a hard or soft thresholding strategy depending on the $\tr{M_k}$. Regardless, the strategy for choosing $f_k$ should depend on the problem, and an uninformed choice is ill-advised unless a sufficient amount of data is being processed. 
\end{remark}

An additional strategy is to delay the index at which $\xi_1^2$ is calculated. To be specific, we can define a stopping time $V$ with respect to $\mathcal{F}_k$ such that $V$ is finite almost surely, and let $r_k = Y_{V+k} - X_{V+k}^T\beta_{V+k-1}$. Indeed, then the process will only be started once a specific condition has been met, such as if $\tr{M_k}$ is below a threshold. This offers more flexibility than manipulating $f_k$ alone. These considerations on $f_k$ and $V$ are collected in the following assumption.
\begin{assumption}
\label{assumption:adaptive tp}
$V$ is a stopping time with respect to $\mathcal{F}_k = \sigma(X_1,\ldots,X_k)$ such that $V$ is almost surely finite with stopped $\sigma$-algebra $\mathcal{F}_V$ (p. 156 in \cite{durrett2010}). Also, take $f_k$ to satisfy:
\begin{remunerate}
\item $f_k \in [0,1]$ and $f_k \to 1$ as $k \to \infty$ almost surely.
\item $f_k$ are $\mathcal{F}_{V+k}$ measurable.
\end{remunerate}
and let $r_k = Y_{V+k} - X_{V+k}^T \beta_{V+k - 1}$, $\xi_{k}^2$ be defined as in (\ref{eqn:XiUpdateForget}), and $\gamma_{V+k}^2 = \phi_\tau(\xi_k^2)$.
\end{assumption}

We are now ready to show that a tuning parameter strategy using Assumption \ref{assumption:adaptive tp} does indeed approximate $\sigma^2$ in the limit.

\begin{proposition}
\label{prop:convergence Xi and Gamma with Forgetting}
If Assumptions \ref{assumption:model}, \ref{assumption:X is in L2}, \ref{assumption:X spans Rn}, and \ref{assumption:adaptive tp} hold then $$\lim_{k \to \infty} \cond{ \xi_{k}^2 }{ \mathcal{F}_{V+k}} = \sigma^2$$ almost surely. Moreover, almost surely:
$$ \phi_{\tau}(\sigma^2) \leq \liminf_{k \to \infty} \E{\left. \gamma_{V+k}^2 \right\vert \mathcal{F}_{V+k}} \leq \limsup_{k \to \infty} \E{\left. \gamma_{V+k}^2 \right\vert \mathcal{F}_{V+k}}\leq \phi_{\tau}(\sigma^2 + \tau^{-1} )$$ 
\end{proposition}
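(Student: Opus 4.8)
The plan is to reduce the recursion to a weighted Cesàro average, evaluate its conditional expectation exactly, and then push the resulting limit through the truncation $\phi_\tau$. First I would unroll (\ref{eqn:XiUpdateForget}) to $\xi_k^2 = \frac{1}{k}\sum_{j=1}^k f_j r_j^2$ (with $f_1 = 1$) and expand $r_j = \epsilon_{V+j} - X_{V+j}^T E_{V+j-1}$ using Assumption \ref{assumption:model}. Conditioning on $\mathcal{F}_{V+k}$, the square $\epsilon_{V+j}^2$ contributes $\sigma^2$; the cross term $\epsilon_{V+j} X_{V+j}^T E_{V+j-1}$ vanishes after conditioning first on $\sigma(\mathcal{F}_{V+k}, \epsilon_1, \ldots, \epsilon_{V+j-1})$ — under which $E_{V+j-1}$ and $X_{V+j}$ are measurable while $\epsilon_{V+j}$ stays independent and mean zero — and then applying the tower property; and since the $X$'s in $\mathcal{F}_{V+k}$ beyond index $V+j-1$ are independent of the noise carried by $E_{V+j-1}$, one gets $\cond{E_{V+j-1}E_{V+j-1}^T}{\mathcal{F}_{V+k}} = \mathcal{M}_{V+j-1}$. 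Pulling out the $\mathcal{F}_{V+j}$-measurable factors $f_j$ yields $\cond{\xi_k^2}{\mathcal{F}_{V+k}} = \frac{\sigma^2}{k}\sum_{j=1}^k f_j + \frac{1}{k}\sum_{j=1}^k f_j\, X_{V+j}^T \mathcal{M}_{V+j-1} X_{V+j}$. Crucially this uses only noise independence, the definition of $\mathcal{M}$, and the freshness of $X_{V+j}$, so it is unaffected by the fact that the adaptive $\gamma$'s depend on the noise.

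Since $f_j \in [0,1]$ and $f_j \to 1$, the first term tends to $\sigma^2$ by Cesàro. The substance is the second term $\frac{1}{k}\sum f_j Z_j$ with $Z_j := X_{V+j}^T \mathcal{M}_{V+j-1} X_{V+j} \geq 0$. I would invoke $M_k \to 0$ a.s. from Theorem \ref{thrm: M_k to 0, M_k behaves like true covariance} — whose proof rests only on the bound $\gamma_k^2 \leq \tau$ and on objects measurable in the $X$'s, hence survives noise-dependent tuning — to drive $\tr{\mathcal{M}_{V+j-1}} \to 0$, so that the conditional means $c_j := \cond{Z_j}{\mathcal{F}_{V+j-1}} = \tr{Q_* \mathcal{M}_{V+j-1}} \to 0$ (using that $X_{V+j}$ is a fresh draw with $\cond{X_{V+j}X_{V+j}^T}{\mathcal{F}_{V+j-1}} = Q_*$). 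Then $\frac{1}{k}\sum c_j \to 0$ by Cesàro, and for the martingale differences I would show $\frac{1}{k}\sum (Z_j - c_j) \to 0$ a.s. by truncating $Z_j$ at level $j$, bounding $\sum_j \Prb{Z_j > j}$ by $\sum_j \E{c_j}/j$ for a conditional Borel--Cantelli step, and running a Chow/Kronecker argument on the truncated part, all of which need only the first moments furnished by Assumption \ref{assumption:X is in L2}.

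For the tuning-parameter bounds, I would transfer $\mu_k := \cond{\xi_k^2}{\mathcal{F}_{V+k}} \to \sigma^2$ through $\phi_\tau$, which is bounded in $[\tau^{-1},\tau]$, nondecreasing, and $1$-Lipschitz but neither convex nor concave. The upper bound follows by dominating $\phi_\tau(x) \leq \phi_\tau(x + \tau^{-1}) = \min(x+\tau^{-1},\tau)$, a concave function, and applying conditional Jensen: $\cond{\phi_\tau(\xi_k^2)}{\mathcal{F}_{V+k}} \leq \min(\mu_k + \tau^{-1}, \tau) \to \phi_\tau(\sigma^2 + \tau^{-1})$. The lower bound is the harder side, since the flattening of $\phi_\tau$ at $\tau$ forces every convex minorant strictly below $\phi_\tau(\sigma^2)$, so Jensen cannot supply it; here I would use $\phi_\tau(\xi_k^2) \geq \min(\xi_k^2,\tau) = \xi_k^2 - (\xi_k^2 - \tau)^+$ together with a one-sided tail control $\cond{(\xi_k^2 - \tau)^+}{\mathcal{F}_{V+k}} \to 0$ (relevant when $\sigma^2 \in (\tau^{-1},\tau)$) to get $\liminf \cond{\phi_\tau(\xi_k^2)}{\mathcal{F}_{V+k}} \geq \phi_\tau(\sigma^2)$, the cases $\sigma^2 \leq \tau^{-1}$ and $\sigma^2 \geq \tau$ following from $\phi_\tau \geq \tau^{-1}$ and from $\mu_k \to \sigma^2 \geq \tau$ respectively.

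The main obstacle is the pair of vanishing-average statements — $\frac{1}{k}\sum f_j Z_j \to 0$ and the companion tail control $\cond{(\xi_k^2 - \tau)^+}{\mathcal{F}_{V+k}} \to 0$ — under only $L^2$ integrability and, at the same time, the measurability complication flagged after Theorem \ref{thrm:M_k estimates covariance better with adaptive tuning parameter}: because the adaptive $\gamma$'s depend on the noise, the cross terms that normally cancel in the closed form behind (\ref{eqn:covariance bounded by estimated covariance}) no longer vanish, so the clean sandwich $\mathcal{M}_k \preceq C M_k$ is unavailable and the decay of $\tr{\mathcal{M}_m}$ must be re-established from the robust facts ($M_k \to 0$, the algebraic unrolling of Lemma \ref{lemm:errorStrongRecursion}, and the independence of $X_{V+j}$ from $\mathcal{F}_{V+j-1}$), with the $\gamma$--$\epsilon$ coupling handled by Cauchy--Schwarz against the bound $\gamma_k^2 \geq \tau^{-1}$. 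This is where I expect the genuine effort, and where Assumption \ref{assumption:adaptive tp}'s stopping-time and forgetting-factor structure must be deployed with care.
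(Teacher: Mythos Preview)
Your skeleton matches the paper's: unroll (\ref{eqn:XiUpdateForget}) to a Ces\`aro sum, compute the conditional expectation to obtain $\frac{\sigma^2}{k}\sum f_j + \frac{1}{k}\sum f_j\, X_{V+j}^T\mathcal{M}_{V+j-1}X_{V+j}$, show the second sum vanishes, and then squeeze $\phi_\tau$ between $\nu_\tau(x)=\min(x,\tau)$ and $\psi_\tau(x)=\max(x,\tau^{-1})$. The genuine divergence is in how the second sum and the lower-bound tail are controlled. The paper does \emph{not} engage with the measurability issue you flag; it simply invokes Theorem~\ref{thrm: M_k to 0, M_k behaves like true covariance} to get $\mathcal{M}_{V+k}\preceq \tau\sigma^2(1+\epsilon)M_{V+k}$ and $\lambda_{\max}(M_{V+k})$ small, bounds $X_{V+j}^T\mathcal{M}_{V+j-1}X_{V+j}\le \frac{\epsilon}{\E{\norm{X}^2}}\norm{X_{V+j}}^2$ pathwise for large $j$, and finishes with the strong law on $\frac{1}{k}\sum\norm{X_{V+j}}^2$. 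For the lower bound on $\gamma$ it proves the stronger statement $\cond{|\xi_k^2-\sigma^2|}{\mathcal{F}_{V+k}}\to 0$ by the same device (plus conditional dominated convergence for the $\epsilon^2$ term and Cauchy--Schwarz for the cross term). This is considerably shorter than your martingale/Kronecker route, but, as you observe, it leans on the sandwich (\ref{eqn:covariance bounded by estimated covariance}), whose derivation in Lemma~\ref{lemm:errorStrongRecursion} pulls $M_{k+1}$ and $1/\gamma_j^2$ outside the conditional expectation on $\mathcal{F}_{k+1}$ --- licit only when the tuning sequence is $\mathcal{F}$-measurable, not when it is built from residuals. The paper acknowledges exactly this difficulty in the paragraph preceding \S\ref{subsec:Adaptive Choice of Tuning Parameters} yet does not revisit it inside the proof. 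Your alternative would purchase rigor at the price of re-establishing $\tr{\mathcal{M}_m}\to 0$ without (\ref{eqn:covariance bounded by estimated covariance}); the paper's route purchases brevity by treating Theorem~\ref{thrm: M_k to 0, M_k behaves like true covariance} as if it applied unchanged.
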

\begin{proof}
Applying (\ref{eqn:XiUpdateForget}) repeatedly,
\begin{align*}
\xi_{k+1}^2 &= \sum_{l=1}^\infty \xi_{k+1}^2 \1{V = l} \\
			&=\frac{1}{k+1} \sum_{l=1}^\infty \sum_{j=1}^{k+1} f_j\left(Y_{l+j} - X_{l+j}^T \beta_{l+j-1}\right)^2 \1{V = l} \\
			&= \frac{1}{k+1} \sum_{j=1}^{k+1} f_j \sum_{l=1}^\infty\left(\epsilon_{l+j} + X_{l+j}^T(\beta^* - \beta_{l+j-1}) \right)^2 \1{V=l} \tag{Assumption \ref{assumption:model}}
\end{align*}
Taking the conditional expectations, using the Conditional Monotone Convergence Theorem (Theorem 5.1.2 in \cite{durrett2010}), and using the facts that $f_j$ and $\1{V=l}$ are measurable with respect to $\mathcal{F}_{V+k+1}$ by construction:
\begin{align*}
&\cond{\xi_{k+1}^2 }{\mathcal{F}_{V+k+1}} \\
	&= \frac{1}{k+1} \sum_{j=1}^{k+1} \sum_{l=1}^\infty \cond{\left(\epsilon_{l+j} + X_{l+j}^T(\beta^* - \beta_{l+j-1}) \right)^2 \1{V=l} }{\mathcal{F}_{V+k+1}} \\
	&= \frac{1}{k+1} \sum_{j=1}^{k+1} f_j \sum_{l=1}^\infty \1{V = l}\cond{ \epsilon_{l+j}^2 }{\mathcal{F}_{V+k+1}} + \1{V=l}X_{l+j}^T\mathcal{M}_{l+j-1}X_{l+j}\\
	&\quad + 2\1{V=l}\cond{\epsilon_{l+j}X_{l+j}^T(\beta^* - \beta_{l+j-1})}{\mathcal{F}_{V+k+1}} \\
	&= \frac{1}{k+1} \sum_{j=1}^{k+1} f_j \sum_{l=1}^\infty \1{V=l} \sigma^2 + \1{V=l} X_{l+j}^T \mathcal{M}_{l+j-1} X_{l+j} \\
	&= \frac{1}{k+1} \sum_{j=1}^{k+1} f_j \sigma^2 + f_jX_{V+j}^T \mathcal{M}_{V+j-1} X_{V+j}
\end{align*}
where we use the facts that (1) $\cond{(\beta^* - \beta_{l+j-1})(\beta^* - \beta_{l+j-1})^T}{\mathcal{F}_{V+k+1}} = \mathcal{M}_{l+j-1}$ since $\mathcal{F}_{V+j-1} \subset \mathcal{F}_{V+k-1}$ and we are restricting to the event $\lbrace V = l \rbrace$, and (2) $\epsilon_{j}$ are independent of $\mathcal{F}_k=\sigma(X_1,\ldots,X_k)$, thus, $\epsilon_j$ are independent of $\mathcal{F}_{V+k}$. Using this equality, we will establish a lower and upper bound on $\cond{\xi_{j}^2}{\mathcal{F}_{V+j}}$.

\textbf{Lower Bound.} Since $ \mathcal{M}_{V+j} \succeq 0$, we have $ \E{\left. \xi_{k+1}^2 \right\vert \mathcal{F}_{k+1}} \geq \frac{\sigma^2}{k+1}\sum_{j=1}^{k+1} f_j$. Let $\epsilon > 0$. Because $V$ is almost surely finite and $f_k \to 1$ almost surely, almost surely there is a $K \in \mathbb{N}$ such that if $k \geq K$ then $f_k \geq (1 - \epsilon)$. Therefore ,
$$ \liminf_{k \to \infty} \E{\left. \xi_{k+1}^2 \right\vert \mathcal{F}_{V+k+1}}  \geq \sigma^2 (1- \epsilon)$$ almost surely.

\textbf{Upper Bound.} For an upper bound, using the same $\epsilon > 0$ as for the lower bound, because $V$ is almost surely finite and by Theorem \ref{thrm: M_k to 0, M_k behaves like true covariance}, almost surely there is a $K \in \mathbb{N}$ such that for $k \geq K$
$$\mathcal{M}_{V+k} \preceq \tau \sigma^2 (1+\epsilon) M_{V+k} \quad\text{and}\quad \lambda_{\max}(M_{V+k}) < \frac{\epsilon}{\tau \sigma^2 (1+\epsilon) \E{\norm{X_1}^2}}$$
Since $f_k \leq 1$:
\begin{align*}
&\E{\left. \xi_{k}^2 \right\vert \mathcal{F}_{V+k}} \\
	&\leq \sigma^2 + \frac{1}{k}\sum_{j={K+1}}^{k} X_{V+j}^T\mathcal{M}_{V+j-1} X_{V+j} + \frac{1}{k}\sum_{j=1}^{K} X_{V+j}^T \mathcal{M}_{V+j-1} X_{V+j} \\
	&\leq \sigma^2 + \frac{\tau \sigma^2(1+\epsilon)}{k}\sum_{j=K+1}^{k} X_{V+j}^T M_{V+j-1} X_{V+j} + \frac{1}{k} \sum_{j=1}^{K} X_{V+j}^T \mathcal{M}_{V+j-1} X_{V+j} \\
&\leq \sigma^2 + \frac{\epsilon}{\E{\norm{X_1}^2}} \frac{1}{k} \sum_{j=K+1}^{k} \norm{X_k}^2 + \frac{1}{k} \sum_{j=1}^{K} X_{V+j}^T \mathcal{M}_{V+j-1} X_{V+j}
\end{align*}
As $k \to \infty$, the third term will vanish and the second term will converge to $\epsilon$ almost surely by the Strong Law of Large Numbers. Therefore, almost surely
$$\sigma^2(1-\epsilon) \leq \liminf_{k \to \infty} \E{\left. \xi_{k+1}^2 \right\vert \mathcal{F}_{k+1}} \leq \limsup_{k \to \infty} \E{\left. \xi_{k+1}^2 \right\vert \mathcal{F}_{k+1}} \leq \sigma^2 + \epsilon$$
Since $\epsilon > 0$ is arbitrary, it follows that the limit of the sequence exists and is equal to $\sigma^2$ almost surely.

\textbf{Bounds on $\gamma_{V+k}^2$.} Define
$$ \nu_{\tau}(x) = \begin{cases}
\tau & x > \tau \\
x & x \leq \tau
\end{cases} \quad \text{ and } \quad 
 \psi_{\tau}(x) = \begin{cases}
x & x > \tau^{-1} \\
\tau^{-1} & x \leq \tau^{-1}
\end{cases}$$
From this definition, we have that for all $x \in \mathbb{R}$, $ \nu_{\tau} \leq \phi_{\tau} \leq \psi_{\tau}$.
In the next statement, for the maximum of a set of values, we use $\vee$, and for the minimum we use $\wedge$. Applying this relationship to $\gamma_{V+k}^2$:
$$ \E{\left. \nu_{\tau}(\xi_k^2)  \right\vert \mathcal{F}_{V+k}} \vee \tau^{-1}\leq \E{\left. \gamma_{k}^2 \right\vert \mathcal{F}_{V+k}} \leq \E{\left. \psi_{\tau}(\xi_{k}^2) \right\vert \mathcal{F}_{V+k}} \wedge \tau$$
First, we consider the right hand side:
\begin{align*}
\cond{\psi_{\tau}(\xi_k^2)}{\mathcal{F}_{V+k}} 
	&= \cond{ \xi_k^2 \1{\xi_k^2 > \tau^{-1}}}{\mathcal{F}_{V+k}} 
		+ \tau^{-1} \Prb{\left. \xi_k^2 < \tau^{-1} \right\vert \mathcal{F}_{V+k}} \\
	&\leq  \cond{\xi_k^2 }{ \mathcal{F}_{V+k}} + \tau^{-1}
\end{align*}
Applying the first part of Proposition \ref{prop:convergence Xi and Gamma with Forgetting}, $ \limsup_{k \to \infty} \cond{ \psi_{\tau}(\xi_k^2)}{ \mathcal{F}_{V+k}} \leq \sigma^2 + \tau^{-1}$ almost surely. For the reverse inequality:
\begin{align*}
\cond{ \nu_{\tau}(\xi_k^2) }{ \mathcal{F}_{V+k}} 
	&= \tau \Prb{\left. \xi_k^2 > \tau \right\vert \mathcal{F}_{V+k}} 
	+ \cond{ \xi_k^2 \1{\xi_k^2 < \tau} }{ \mathcal{F}_{V+k}} \\
	&\geq \cond{ \xi_k^2  }{ \mathcal{F}_{V+k}} 
	- \cond{ \left( \xi_k^2 - \tau \right)_+ }{ \mathcal{F}_{V+k}} \\
	&\geq \cond{ \xi_k^2  }{\mathcal{F}_{V+k}} 
	- \cond{ \left( \xi_k^2 - \sigma^2\right)_+ }{ \mathcal{F}_{V+k}} 
	- \left( \sigma^2  - \tau \right)_+ \\
	&\geq \cond{ \xi_k^2 }{\mathcal{F}_{V+k}} 
	- \cond{ \left\vert \xi_k^2 - \sigma^2 }{ \right\vert \mathcal{F}_{V+k}} 
	- \left( \sigma^2  - \tau \right)_+
\end{align*}
By the first part of the result, the first term converges to $\sigma^2$, and we are left with showing that the second term converges to $0$.  
\begin{align*}
&\cond{ \left\vert \xi_{k}^2 - \sigma^2 }{ \right\vert \mathcal{F}_{V+k}} \\
	&\leq \cond{\left\vert \frac{1}{k} \sum_{j=1}^{k} f_j\epsilon_{V+j}^2 
	- \sigma^2 \right\vert}{\mathcal{F}_{V+k}} 
	+ \frac{2}{k}\cond{\sum_{j=1}^k  f_j\left\vert \epsilon_{V+j} X_{V+j}^T
	(\beta^* - \beta_{V+j-1}) \right\vert }{ \mathcal{F}_{V+k}} \\
	&\quad + \frac{1}{k}\sum_{j=1}^k f_jX_{V+j}^T \mathcal{M}_{V+j-1} X_{V+j}
\end{align*}

For the first term, we note that (1) the argument is bounded by $\frac{1}{k} \sum_{j=1}^k \epsilon_{V+j}^2 + \sigma^2$, for which using an identical conditioning argument as in the first part of the proof, gives the quantity's expectation as $2\sigma^2$, and (2) because $V$ is finite almost surely and by the strong law of large numbers, almost surely $\lim_{k \to \infty} \frac{1}{k} \sum_{j=1}^k f_j\epsilon_j^2 = \sigma^2 $. Therefore, by the conditional Dominated Convergence Theorem, the first term converges to $0$ almost surely.

For the cross term
\begin{align*}
& \frac{2}{k}\cond{\sum_{j=1}^k  f_j\left\vert \epsilon_{V+j} X_{V+j}^T
		(\beta^* - \beta_{V+j-1}) \right\vert }{ \mathcal{F}_{V+k}} \\
	&\quad \leq \frac{2}{k} \sum_{j=1}^k \cond{ |\epsilon_{V+j}| \norm{X_{V+j}}_2 \norm{\beta^* - 
		\beta_{V+j-1}}_2 }{ \mathcal{F}_{V+k}} 
		\tag{Cauchy-Schwarz} \\
	&\quad \leq \frac{2}{k} \sum_{j=1}^k \sum_{l=1}^\infty \cond{|\epsilon_{l+j}|\norm{X_{l+j}}_2 \norm{\beta_{l+j-1} - \beta^*}_2 \1{V = l} }{ \mathcal{F}_{V+k-1}}
\end{align*}
where in the last line, the expectation and summation are alternated by the conditional Monotone Convergence Theorem. Using an identical conditioning argument as in the first part of the proof, and noting $\E{|\epsilon_j|}^2 \leq \E{|\epsilon_j|^2} = \sigma^2$ by Jensen's inequality, the cross term is bounded by 
\begin{equation*}
\frac{2}{k}\cond{\sum_{j=1}^k  f_j\left\vert \epsilon_{V+j} X_{V+j}^T
		(\beta^* - \beta_{V+j-1}) \right\vert }{ \mathcal{F}_{V+k}} 
 \leq \frac{2 \sigma}{k} \sum_{j=1}^k \norm{X_{V+j}}_2 \tr{\mathcal{M}_{V+j-1}}^{1/2}
\end{equation*}
Using the same argument as for the upper bound of $\cond{\xi_k^2}{\mathcal{F}_{V+k}}$, we have that for any $\epsilon > 0$
$$ \limsup_{k \to \infty} \frac{2}{k}\E{\left. \sum_{j=1}^k  \left\vert \epsilon_j X_j^T(\beta^* - \beta_{j-1}) \right\vert \right\vert \mathcal{F}_{k}} \leq \epsilon$$
Therefore, the limit exists and is $0$. To summarize:
$$\liminf_{k \to \infty} \E{\left. \nu_{\tau}(\xi_k^2) \right\vert \mathcal{F}_k} \geq \sigma^2 - \left( \sigma^2  - \tau \right)_+ = \sigma^2 \wedge \tau$$
\qquad \end{proof}

\section{Algorithms}
\label{sec:Algorithms}
Using update equations (\ref{eqn:updatebeta}) and (\ref{eqn:updateCov}), we state a provably convergent, fast and simple algorithm for solving the linear regression problem, Algorithm \ref{alg:kSGD}, which also has a computationally fast, justified stop condition (see Theorem \ref{thrm: M_k to 0, M_k behaves like true covariance}), is insensitive to the problem's conditioning (see Theorem \ref{cor:Convergence of Objective Function}), works for a robust choice of tuning parameters (see condition (\ref{eqn:Condition on Tuning Parameters}), and Theorem \ref{thrm:Necessary TP Condition}), and can be used on very large, infinite or streaming data sources.

\begin{algorithm}[th]
	\caption{A kSGD Algorithm}
	\label{alg:kSGD}
	\KwIn{$\beta$ arbitrary initialization, rule for choosing $\gamma_k^2$, error tolerance $\epsilon > 0$}
	$M \leftarrow I$ \\
	$k \leftarrow 0$ \\
	\While{$\tr{M} > \epsilon$}{
		Read next observation: $(X,Y)$ \\
		Compute $v \leftarrow MX$ \\
		Update $\gamma_k^2$ according to user defined method\\
		Compute denominator: $s \leftarrow \gamma^2 + X^T v$ \\
		Update parameter estimate: $\beta \leftarrow \beta + v(Y- X^T \beta)/s$ \\
		Update covariance estimate: $M \leftarrow (I - \frac{vX^T}{s})M$ \\
		Increment $k \leftarrow k + 1$ \\ }
	\KwOut{Estimates: $\left(\beta,M\right)$}
\end{algorithm}

When $\sigma^2$ is very large, the stop condition can be improved by using a tuning parameter strategy which asymptotically estimates $\sigma^2$ (see Theorem \ref{thrm:M_k estimates covariance better with adaptive tuning parameter}). Motivated by this result, a class of adaptive tuning parameter strategies was constructed (\S \ref{subsec:Adaptive Choice of Tuning Parameters}) and analytically shown to converge to $\sigma^2$ in some sense (see Proposition \ref{prop:convergence Xi and Gamma with Forgetting}). One example from this class of adaptive tuning parameter strategies is stated in Algorithm \ref{alg:Adaptive Tuning Parameter, Soft Threshold}.

\begin{algorithm}[ht]
	\caption{Adaptive Tuning Parameter Soft-Threshold Sub-routine}
	\label{alg:Adaptive Tuning Parameter, Soft Threshold}
	\KwIn{counter $k$, lower tolerance $L$, upper tolerance $U$, threshold $T$, $\tr{M_k}$, estimator $\xi_{k}^2$, residual $r_{k+1}$}
	$k \leftarrow k + 1$
	Calculate $f_k \leftarrow \left[1 + \exp(\tr{M_{k-1}} - T)\right]^{-1}$ \\	
	Update $\xi_{k}^2$ by (\ref{eqn:XiUpdateForget}) \\
	$\gamma_{k}^2 \leftarrow \min\left\lbrace U, \max\lbrace L, \xi_{k}^2 \rbrace \right\rbrace$ \\
	\KwOut{$\left(\xi_{k}^2, \gamma_{k}^2, k\right)$}
\end{algorithm}

\section{Complexity Comparison}
\label{sec:complexity}
\begin{table}
\centering
\caption{A comparison of the number of data points assimilated, gradient evaluations, Hessian evaluations, floating point operations and memory requirements per iteration between SGD, kSGD, and SQN. For SQN, $M$ is the number of curvature correction pairs stored, $b$ is the size of batch gradients, $b_h$ is the size of batch Hessians, and $L$ is the number of iterations between BFGS updates.}
\label{tbl:optimization metrics}
\begin{tabular}{@{} c c c c c c @{}}\toprule
Method & Data Assim. & Gradient & Hessian & FP Ops & Memory \\ \midrule
SGD & 1 & 1 & 0 & $\bigO{n}$ & $\bigO{n}$ \\
kSGD & 1 & 1 & 0 & $\bigO{n^2}$ & $\bigO{n^2}$ \\
SQN & $b+b_h/L$ & $b$ & $b_h/L$ & $\bigO{(M+b)n+ (b_h/L)n^2}$ & $\bigO{Mn}$
\end{tabular}
\end{table}

Table \ref{tbl:optimization metrics} reports several common metrics for assessing stochastic optimization algorithms. One notable property in Table 1 is that kSGD has the highest memory requirements. Therefore, when $n$ is large, kSGD would be infeasible; this can be addressed by making kSGD a low-memory method, but this will not be considered further in this paper. Another notable property is that the floating point costs per data point assimilated appear to be approximately the same for kSGD and SQN. However, the comparability of these two values depends on $b_h$: for many statistical regression problems, $b_h < n$ would lead to rank deficient estimates of the Hessian, which would then lead to poorly scaled BFGS updates to the inverse Hessian. Therefore, $b_h$ should be taken to be larger than $n$ to ensure full rank estimates of the Hessian. In this case, SQN requires at least $\mathcal{O}(n^3/L)$ floating point operations per data point assimilated. Since $L$ is recommended to be $10$ or $20$ \cite{byrd2016}, SQN has a greater computational cost than kSGD when $n > 20$.

\section{Numerical Experiments}
\label{sec:Empirical Example}
Three problem were experimented on: a linear regression problem on medical claims payment by CMS \cite{CMSData,Rbloggers}, an additive non-parametric Haar wavelet regression problem on gas sensor readings \cite{GasData,fonollosa2015}, and a logistic regression problem on adult income classification \cite{AdultData,kohavi1996}. For each problem, the dimension of the unknown parameter ($n$), number of observations ($N$), condition number ($\kappa$) of the Hessian at the minimizer, and the optimization methods implemented on the problem are collected in Table \ref{tbl:Summary of Problems}. 

\begin{remark} 
For the linear and Haar wavelet regression problems, the Hessian does not depend on the parameter, and so it can be calculated directly. For the logistic regression problem, the minimizer was first calculated using generalized Gauss Newton (GN) \cite{wedderburn1974} and confirmed by checking that the composite gradient at the minimizer had a euclidean norm no larger than $10^{-10}$; then, the Hessian was calculated at this approximate minimizer. 
\end{remark}

\begin{table}[hb]
\centering
\caption{A tabulation of the number of parameters ($n$), number of observations ($N$), condition number ($\kappa$) of the Hessian at the minimizer, and optimization methods implemented for each of the three problem types. Note, the Haar wavelet regression problem's maximum eigenvalue was $28.7$, but its smallest eigenvalue was within numerical precision of zero.}
\begin{tabular}{@{} c c c c c @{}}\toprule
Problem & $N$ & $n$ & $\kappa$ & Methods \\ \midrule
CMS-Linear & $2,801,660$ & $34$ & $2.44\times10^6$ & kSGD,SGD,SQN \\
Gas-Haar & $4,177,004$ & $1,263$ & $-$ & kSGD, SGD, SQN\\
Income-Logistic & $30,162$ & $29$ & $1.96\times10^{24}$ & kSGD, SGD, SQN, GN
\end{tabular}
\label{tbl:Summary of Problems}
\end{table} 

For each method, intermediate parameter values, elapsed compute time, and data points assimilated (ADP) were periodically stored. Once the method terminated, the objective function was calculated at each stored parameter value using the entire data set. The methods are compared along two dimensions:
\begin{remunerate}
\item Efficiency: the number of data points assimilated (ADP) to achieve the objective function value. The higher the efficiency of a method, the less information it needs to minimize the objective function. Thus, higher efficiency methods require fewer data points or fewer epochs in order to achieve the same objective function value in comparison to a lower efficiency method.
\item Effort: the elapsed time (in seconds) to achieve the objective function value. This metric is a proxy for the cost of gradient evaluations, Hessian evaluations, floating point operations, and I/O latencies. Thus, higher effort methods require more resources or more time in order to achieve the same objective function value in comparison to a lower effort method.
\end{remunerate}

The methods are implemented in the Julia Programming Language (v0.4.5). For the linear and logistic regression problem, the methods were run on an Intel i5 (3.33GHz) CPU with 3.7 Gb of memory; for the Haar Wavelet regression problem, the methods were run on an Intel X5650 (2.67GHz) CPU with 10 Gb of memory. 

\begin{remark}
The objective function for the linear and Haar wavelet regression is the mean of the residuals squared (MRS). Therefore, for these problems, the results are discussed in terms of MRS.
\end{remark}

\subsection{Linear Regression for CMS Payment Data}
\label{subsec:linear regression}
We modeled the medical claims payment as a linear combination of the patient's sex, age and place of service. Because the explanatory variables were categorical, there were $n = 34$ parameters. The optimal MRS was determined to be $38,142.6$ using an incremental QR algorithm \cite{miller1992}. 

The three methods, SGD, kSGD and SQN, were initialized at zero. For SGD, the learning rate was taken to be of the general form
\begin{equation}
\label{eqn:SGDlearningRates}
\eta(k,p,c_1,c_2,c_3) = c_1 \1{k \leq c_2} + \frac{c_3}{(k - c_2)^p} \1{k > c_2}
\end{equation}
where $k$ is the ADP, $p \in (0.5,1]$ (see \cite{robbins1951,murata1998}), $c_1 \in [0,\infty)$, $c_2 \in [0,\infty]$ (see \cite{bert2010}), and $c_3 \in (0,\infty)$. SGD was implemented for learning rates over a grid of values for $p$, $c_1$, $c_2$ and $c_3$. The best learning rate, $(p = 0.75, c_1 = 0.01, c_2 = 10^5, c_3 = 1)$, achieved the smallest MRS. Note, this learning rate took only $0.01$ seconds longer per epoch than the fastest learning rate. For SQN, the parameters $b$, $b_h$ and $L$ were allowed to vary between the recommended values $b = 100,1000$, $b_h = 300, 600$, $L = 10, 20$ \cite{byrd2016}, $M$ was allowed to take values $10, 20, 34$, and the learning rate constant, $c$, was allowed to vary over a grid of positive numbers. The best set of parameters, $(b = 1000, b_h = 300, L = 20, M = 34, c = 2)$, came within $2\%$ of the optimal MRS with the smallest ADP and least amount of time. The tuning parameters for kSGD, summarized in Table \ref{tbl:linear_tuningParameters}, were selected to reflect the concepts discussed in \S \ref{subsec:Conditions on Tuning Parameters} and were not determined based on any results from running the method.

\begin{table}[bh]
\centering
\caption{Tuning parameter selection for kSGD method. kSGD-1 uses a tuning parameter based on Theorem \ref{thrm:Necessary TP Condition}. kSGD-2 uses a tuning parameter based on (\ref{eqn:Covariance Update}). kSGD-3 uses a tuning parameter to increase the speed of convergence based on the discussion in \S \ref{subsec:Conditions on Tuning Parameters}.}
\label{tbl:linear_tuningParameters}
\begin{tabular}{@{} c c @{}}\toprule
Label & $\gamma_k^2$ \\ \midrule
kSGD-1 & $k^{-1}$ \\
kSGD-2 & $38,000$ \\
kSGD-3 & $0.0001$
\end{tabular}
\end{table} 

Figure \ref{fig:linear_comparison} visualizes the differences between SGD, kSGD and SQN in terms of efficiency and effort. In terms of efficiency, kSGD-1, kSGD-3 and SQN are comparable, whereas kSGD-2 and SGD perform quite poorly in comparison. For kSGD-2, this behavior is to be expected for large choices of $\gamma_k^2$ as discussed below. For SGD, despite an optimal choice in the learning rate, it still does not come close to the optimal MRS after a single epoch. Even when SGD is allowed to complete multiple epochs so that its total elapsed time is greater than kSGD's single epoch elapsed time (Figure \ref{fig:linear_comparison}, right), SGD does not make meaningful improvements towards the optimal MRS. Indeed, this is to be expected as the rate of convergence of SGD is $\mathcal{O}(\sigma^2\kappa^2/k)$ where $k$ is the ADP \cite{bousquet2008}, and, therefore, SGD requires approximately $\mathcal{O}(10^9)$ epochs to converge to the minimizer. We also see in Figure \ref{fig:linear_comparison} (right) that kSGD-1 and kSGD-3 require much less effort to calculate the minimizer in comparison to SQN.  

\begin{figure}[ht]
\centering
\includegraphics[scale=0.65]{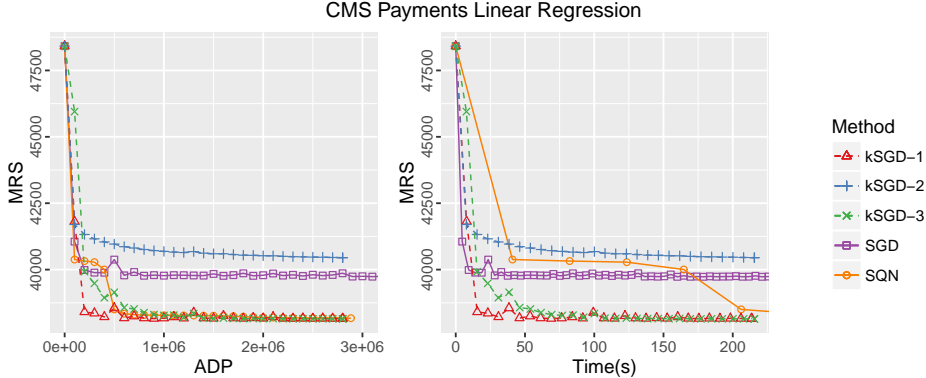}
\caption{A comparison of the performance of SGD, kSGD and SQN for the linear regression problem. {\em Left:} In terms of efficiency, kSGD-1, kSGD-3 and SQN are comparable, whereas kSGD-2 and SGD perform quite poorly in comparison. {\em Right:} kSGD-1 and kSGD-3 produce nearly optimal estimates within the first 50 seconds, which is approximately the amount of time SGD needs to complete one epoch. Also, kSGD-1 and kSGD-3 require less effort than SQN.}
\label{fig:linear_comparison}
\end{figure}

Figure \ref{fig:kSGD_Covariance} visualizes the behavior of the covariance estimates for each of the three kSGD tuning parameter choices. We highlight the sluggish behavior of $M_k$ for kSGD-2, which underscores one of the ideas discussed in \S \ref{subsec:Conditions on Tuning Parameters}: if $\sigma^2$ is large, choosing $\gamma_k^2$ to approximate $\sigma^2$ for all $k$ will slow down the convergence of the algorithm. Another important property is that, despite some variability, $\tr{M_k}$ is reflective of the decay in MRS; this property empirically reinforces the result in Theorem \ref{thrm: M_k to 0, M_k behaves like true covariance}, and the claim that $M_k$ can be used as a stop condition in practice.

\begin{figure}[ht]
\centering
\label{fig:kSGD_Covariance}
\includegraphics[scale=0.65]{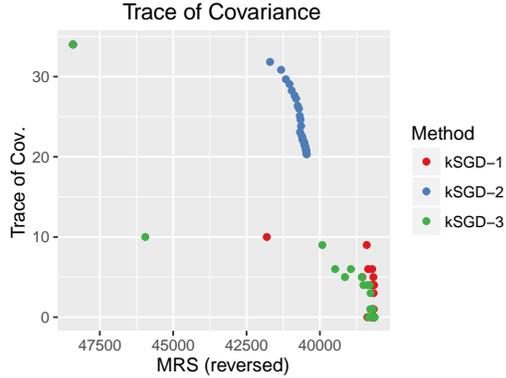}
\caption{A comparison of the MRS and trace of the covariance. The rapid decay of kSGD-1's and kSGD-3's covariance is reflected in the rapid decay of their MRS. On the other hand, kSGD-2's covariance is decaying slowly and this too is reflected in the slow decay of the MRS.}
\end{figure}

\subsection{Nonparametric Wavelet Regression on Gas Sensor Readings}
\label{subsec:nonparametric wavelet regression}
We modeled the ethylene concentration in a mixture of Methane, Ethylene and Air as an additive non-parametric function of time and sixteen gas sensor voltage readings. Because the response and explanatory variables were in bounded intervals, the function space was approximated using Haar wavelets without shifts \cite{haar1910}. The resolution for the time component was $9$, and the resolution for each gas sensor was $3$, which resulted in features and a parameter of dimension $n = 1,263$. 
\begin{remark} Because of their high-cost of calculation, the features were calculated in advance of running the methods. \end{remark}

Again, SGD, kSGD and SQN were implemented and intialized at zero. For SGD, using the same criteria as described in \S \ref{subsec:linear regression}, the best learning rate was found to be $(p = 0.8, c_1 = 0.0, c_2 = 0.0, c_3 = 1)$. For SQN, regardless of the choice of parameters (over a grid larger than the one used in \S \ref{subsec:linear regression}), the BFGS estimates quickly became unstable and caused the parameter estimate to diverge. For kSGD, the method was implemented with $\gamma_k^2 = 0.0001$.

Figure \ref{fig:haar_compare} compares SGD and kSGD. Although kSGD is much more efficient than SGD, it is remarkably slower than SGD. For this problem, this difference in effort can be reduced by using sparse matrix techniques since at most $74$ of the $1,263$ components in each feature vector are non-zero; however, for dense problems this issue can only be resolved by parallelizing the floating point operations at each iteration.

\begin{figure}[ht]
\centering
\label{fig:haar_compare}
\includegraphics[scale=0.65]{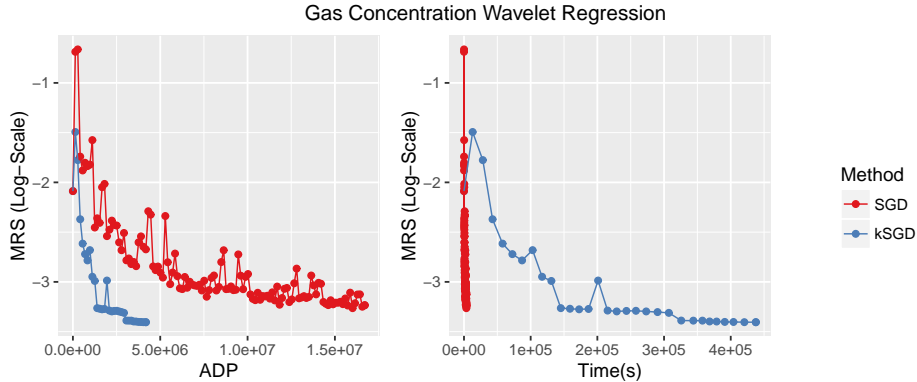}
\caption{A comparison of SGD and kSGD on the Haar wavelet regression problem. {\em Left:} kSGD is more efficient than SGD. {\em Right:} kSGD requires significantly more effort.}
\end{figure}

\subsection{Logistic Regression on Adult Incomes}
\label{subsec:logistic regression}
We modeled two income classes as a logistic model with eight demographic explanatory variables. Four of the demographic variables were continuous and the remaining four were categorical, which resulted in $n=29$ parameters.  

SGD, kSGD, SQN and GN were implemented and intialized at zero. For SGD, the best learning rate was found to be $(p=0.5,c_1=0.0,c_2=0.0,c_3=0.01)$. For SQN, the best parameter set was found to be $(b=1000,b_h=300,L=10,M=29,c=10)$. For GN, there were no tuning parameters. kSGD was adapted to solve the GN subproblems up to a specified threshold on the trace of the covariance estimate. After one subproblem was solved, the threshold was decreased by a fixed factor, which was arbitrarily selected to be 5. The threshold was arbitrarily intialized at 15, and $\gamma_k^2$ was started at $0.0001$ and increased by a factor of $10$ until the method did not fail; the method succeeded when $\gamma_k^2 = 0.1$.

Figure \ref{fig:logistic_compare} visualizes the efficiency and effort of the four methods. In general, the behavior of kSGD, SGD and SQN follow the trends in the two preceding examples. An interesting feature is that kSGD had greater efficiency and required less effort than GN. This is due to the fact that kSGD incompletely solves the GN subproblem away from the minimizer, while GN solves the subproblem exactly at each iteration. However, it is important to note that the choice of kSGD tuning parameters is not as straightforward for the logistic regression problem as it is for the linear regression problem, and appropriate choices will not be discussed further in this paper.

\begin{figure}[ht]
\centering
\label{fig:logistic_compare}
\includegraphics[scale=0.65]{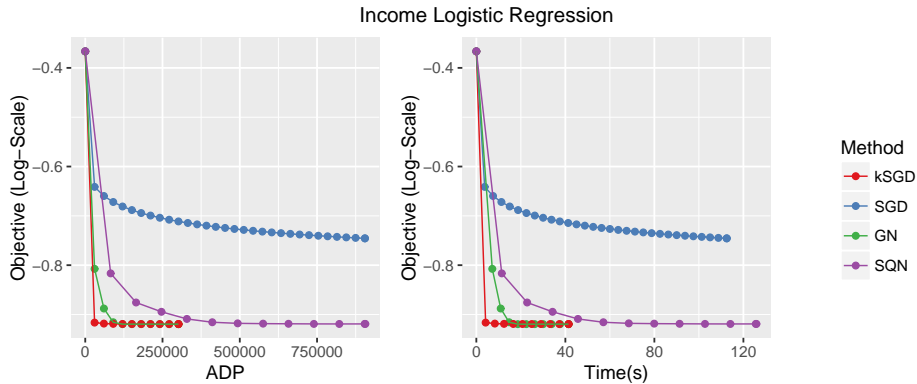}
\caption{A comparison of SGD, kSGD, SQN and GN on the logistic regression problem. {\em Left:} kSGD is more efficient than all three methods. {\em Right:} kSGD requires less effort than all three methods.}
\end{figure} 

\section{Conclusion}
\label{sec:Conclusion}
We developed and analyzed kSGD on the limited, but important class of linear regression problems. In doing our analysis, we achieve a method which (1) asymptotically recovers an optimal stochastic update method, (2) converges for a robust choice of tuning parameters, (3) is insensitive to the problem's conditioning, and (4) has a computationally efficient stop condition. As a result of our analysis, we translated this method into a simple algorithm for estimating linear regression parameters; we then successfully implemented this algorithm for solving linear, non-parametric wavelet and logistic regression problems using real data. Moreover, our analysis provides a novel strategy for analyzing the convergence of second order SGD and proximal methods, which leads to theoretical results that correspond to intuitive expectations. Finally, our analysis provides a foundation for embedding kSGD in multiple epoch algorithms, extending kSGD to a larger class of problems, and developing parallel and low memory kSGD algorithms.

\begin{appendix}
\section{Renewal Process}
We show that the stochastic process $\lbrace S_k: k+1 \in \mathbb{N} \rbrace$ defined in (\ref{eqn:renewalProcess}) is a renewal process (see Section 4.4 in \cite{durrett2010}). We define the inter-arrival times, $T_k = S_k - S_{k-1}$ for all $k \in \mathbb{N}$. 
\begin{lemma}
\label{lemm:interarrivalTimeIndependence}
If $X_1,X_2,\ldots$ are independent and identically distributed, then $T_1,T_2,\ldots$ are independent and identically distributed.
\end{lemma}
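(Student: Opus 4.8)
The plan is to exhibit each inter-arrival time $T_k$ as the value of a single fixed measurable functional applied to the sequence of observations \emph{shifted} so as to begin immediately after the previous renewal, and then to invoke the renewal (strong Markov) property of i.i.d. sequences to decouple these shifted blocks from the past.

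First I would isolate the functional that governs a single inter-arrival. Define $\tau:(\mathbb{R}^n)^{\mathbb{N}} \to \mathbb{N}\cup\lbrace\infty\rbrace$ by
$$\tau(x_1,x_2,\ldots) = \min\left\lbrace m \geq 1: span[x_1,\ldots,x_m] = \mathbb{R}^n \right\rbrace,$$
with the convention $\min\emptyset = \infty$. Each event $\lbrace span[x_1,\ldots,x_m] = \mathbb{R}^n \rbrace$ depends on only finitely many coordinates and is Borel (it is the full-rank locus of the $n\times m$ matrix $[x_1,\ldots,x_m]$), so $\tau$ is measurable with respect to the product $\sigma$-algebra. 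By the definition of $S_1$ in (\ref{eqn:renewalProcess}), $T_1 = S_1 = \tau(X_1,X_2,\ldots)$, and more generally $T_k = S_k - S_{k-1} = \tau(X_{S_{k-1}+1},X_{S_{k-1}+2},\ldots)$; that is, $T_k$ is the very same functional $\tau$ evaluated on the block of observations beginning just after the $(k-1)$st renewal.

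Next I would attend to the filtration bookkeeping. With $\mathcal{F}_m = \sigma(X_1,\ldots,X_m)$, the partial sum $S_{k-1} = T_1 + \cdots + T_{k-1}$ is a stopping time: whether the renewal construction has produced exactly $k-1$ spanning blocks by time $m$ is determined by $X_1,\ldots,X_m$, so $\lbrace S_{k-1} = m\rbrace \in \mathcal{F}_m$. It follows that $T_1,\ldots,T_{k-1}$ are all measurable with respect to the stopped $\sigma$-algebra $\mathcal{F}_{S_{k-1}}$.

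The crux is the standard renewal property of i.i.d. sequences (Section 4.4 in \cite{durrett2010}): because $S_{k-1}$ is a stopping time for $\lbrace \mathcal{F}_m \rbrace$, on the event $\lbrace S_{k-1} < \infty \rbrace$ the shifted sequence $(X_{S_{k-1}+1},X_{S_{k-1}+2},\ldots)$ is independent of $\mathcal{F}_{S_{k-1}}$ and has the same law as $(X_1,X_2,\ldots)$. Applying the measurable map $\tau$, it follows that $T_k = \tau(X_{S_{k-1}+1},\ldots)$ is independent of $\mathcal{F}_{S_{k-1}}$ — and hence of $(T_1,\ldots,T_{k-1})$ — and is distributed exactly as $\tau(X_1,X_2,\ldots) = T_1$. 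An induction on $k$ then yields that $T_1,T_2,\ldots$ are independent with a common distribution. I expect the main obstacle to be the careful invocation of this strong-Markov/renewal step, together with the minor bookkeeping of the (possibly defective) case $S_{k-1} = \infty$, where one simply sets $T_k = \infty$ so that the independence and equidistribution statements hold uniformly; under Assumption \ref{assumption:X spans Rn} this event has probability zero, but the i.i.d. conclusion itself requires only the shift argument and not the finiteness of $\tau$.
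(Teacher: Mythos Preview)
Your argument is correct and takes a genuinely different route from the paper. The paper proceeds by elementary, bare-hands conditioning: to show identical distribution it writes $\Prb{T_k \geq j} = \sum_{s} \Prb{T_k \geq j \mid S_{k-1}=s}\Prb{S_{k-1}=s}$, drops the conditioning by noting that $\sigma(X_{s+1},\ldots)$ is independent of $\sigma(X_1,\ldots,X_s)$, and then uses stationarity of the $X_i$ to rewrite the probability in terms of $X_1,\ldots,X_j$; independence is handled by the same device applied recursively to $\Prb{T_{k_r}=j_r,\ldots,T_{k_1}=j_1}$. You instead package the whole mechanism into a single measurable functional $\tau$ on sequences, observe that each $T_k$ is $\tau$ applied to the post-$S_{k-1}$ shift, and invoke the strong Markov/renewal property of i.i.d.\ sequences once. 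Your approach is shorter and more conceptual, and it cleanly separates the two ingredients (stopping-time measurability of $S_{k-1}$; distributional invariance of the shifted sequence) that the paper's computation intertwines. The paper's approach, on the other hand, is fully self-contained and does not require the reader to accept the strong Markov property for i.i.d.\ sequences as a black box. Your treatment of the defective case $S_{k-1}=\infty$ is also slightly more careful than the paper's, which tacitly sums only over finite values of $S_{k-1}$.
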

\begin{proof}
Let $k,j \in \mathbb{N}$
\begin{align*}
\Prb{T_k \geq j} &= \sum_{s=1}^\infty \Prb{T_k \geq j \vert S_{k-1} = s} \Prb{ S_{k-1} = s} \\
			  &= \sum_{s=1}^\infty \Prb{ span[X_{s+1},\ldots,X_{s+j}] = \mathbb{R}^n \vert S_{k-1} = s} \Prb{ S_{k-1} = s}
\end{align*}
Note that $\sigma(X_{s+1},\ldots,X_{s+j})$ is independent of $\sigma(X_1,\ldots,X_s)$, and so:
\begin{align*}
\Prb{T_k \geq j} &= \sum_{s=1}^\infty \Prb{span[X_{s+1},\ldots,X_{s+j}] = \mathbb{R}^n} \Prb{ S_{k-1} = s}
\end{align*}
Finally, $X_1,\ldots,X_j$ has the same distribution as $X_{s+1},\ldots,X_{s+j}$. Therefore:
\begin{align*}
\Prb{T_k \geq j} &= \sum_{s=1}^\infty \Prb{span[X_1,\ldots,X_j] = \mathbb{R}^n} \Prb{S_{k-1} = s} \\
			 & = \Prb{T_1 \geq j} \sum_{s=1}^\infty \Prb{S_{k-1} = s} \\
			 & = \Prb{T_1 \geq j}
\end{align*}
We have established that $T_1,T_2,\ldots$ are identically distributed. Now let $k_1 < k_2 < \cdots < k_r $ be positive integers with $r \in \mathbb{N}$. Let $j_1,\ldots,j_r \in \mathbb{N}$.
\begin{align*}
&\Prb{T_{k_r} = j_r,\ldots,T_{k_1} = j_1} \\
&= \sum_{s=1}^\infty \Prb{T_{k_r} = j_r \vert S_{k_r-1}=s,T_{k_{r-1}}= j_{r-1},\ldots,T_{k_1}=j_1} \\ &\quad\times \Prb{S_{k_r-1}=s,T_{k_{r-1}}= j_{r-1},\ldots,T_{k_1}=j_1}
\end{align*}
Just as above, $\sigma(X_{s+1},\ldots,X_{s+j_r})$ is independent of $\sigma(X_1,\ldots,X_s)$ and so:
\begin{align*}
&\Prb{T_{k_r} = j_r,\ldots,T_{k_1} = j_1} \\
&= \sum_{s=1}^\infty \Prb{T_{k_r} = j_r} \Prb{S_{k_r-1}=s,T_{k_{r-1}}= j_{r-1},\ldots,T_{k_1}=j_1} \\
&= \Prb{T_{k_r} = j_r} \sum_{s=1}^\infty \Prb{S_{k_r-1}=s,T_{k_{r-1}}= j_{r-1},\ldots,T_{k_1}=j_1} \\
&= \Prb{T_{k_r} = j_r} \Prb{T_{k_{r-1}}= j_{r-1},\ldots,T_{k_1}=j_1}
\end{align*}
Applying the argument recursively, we have established independence.
\qquad\end{proof}

\begin{lemma}
\label{lemm:Expected Inter Arrival Time is Finite}
If $X_1,X_2,\ldots$ are independent, identically distributed, and satisfy Assumption \ref{assumption:X spans Rn}, then $\E{T_1} < \infty$ and $\E{S_k} = k \E{T_1}$ for all $k$. 
\end{lemma}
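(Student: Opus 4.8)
The plan is to prove the finiteness of $\E{T_1}$ first, since the identity $\E{S_k} = k\E{T_1}$ then follows immediately: by Lemma~\ref{lemm:interarrivalTimeIndependence} the inter-arrival times $T_1,\ldots,T_k$ are identically distributed, and $S_k = \sum_{i=1}^k T_i$, so once each term is finite, linearity of expectation gives $\E{S_k} = \sum_{i=1}^k \E{T_i} = k\E{T_1}$. Thus the whole content is the bound on $\E{T_1}$, which I would establish by showing that $T_1$ has a geometric tail.

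For the finiteness of $\E{T_1}$, I would first show that some finite number $N$ of fresh samples spans $\mathbb{R}^n$ with positive probability, i.e.\ $p_0 := \Prb{\mathrm{span}[X_1,\ldots,X_N] = \mathbb{R}^n} > 0$. I would prove this by induction on the running dimension $d_m := \dim \mathrm{span}[X_1,\ldots,X_m]$, establishing that for each $j \in \{0,\ldots,n\}$ there is a finite $m_j$ with $\Prb{d_{m_j} \ge j} > 0$, the base case being immediate from $\Prb{X_1 \ne 0} > 0$ (itself a consequence of Assumption~\ref{assumption:X spans Rn}). For the inductive step with $j < n$, I would condition on $\mathcal{F}_{m_j}$ and add one sample: on the positive-probability event $\{d_{m_j} \ge j\}$, either the running span is already $\mathbb{R}^n$ (whence $d_{m_j} \ge j+1$ outright), or it is a proper, $\mathcal{F}_{m_j}$-measurable subspace $W$ contained in some hyperplane $\{x : x^T v = 0\}$. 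In the latter case, since $X_{m_j+1}$ is independent of $\mathcal{F}_{m_j}$ and Assumption~\ref{assumption:X spans Rn} gives $\Prb{X^T v = 0} < 1$, the conditional probability $\Prb{X_{m_j+1} \notin W \mid \mathcal{F}_{m_j}}$ is strictly positive there; integrating it over $\{j \le d_{m_j} \le n-1\}$ forces $\Prb{d_{m_j+1} \ge j+1} > 0$. Taking $N = m_n$ then yields $p_0 > 0$.

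With $p_0 > 0$ in hand, I would finish by a block-geometric domination. Partition the sequence into consecutive disjoint blocks of length $N$, block $j$ consisting of $X_{(j-1)N+1},\ldots,X_{jN}$; by the i.i.d.\ assumption on $\{X_i\}$ these blocks are i.i.d., and each spans $\mathbb{R}^n$ with probability $p_0$. Letting $J$ be the index of the first block that spans $\mathbb{R}^n$, the cumulative span $\mathrm{span}[X_1,\ldots,X_{JN}]$ is all of $\mathbb{R}^n$, so $T_1 = S_1 \le JN$. Since $J$ is geometric with success probability $p_0$, this gives $\E{T_1} \le N\,\E{J} = N/p_0 < \infty$.

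The main obstacle I anticipate is the inductive step establishing $p_0 > 0$: the subtlety is that the direction $v$ certifying that the running span is proper is itself random, depending on $X_1,\ldots,X_{m_j}$, so Assumption~\ref{assumption:X spans Rn} cannot be invoked with a single fixed $v$. I would resolve this realization-by-realization through the conditional probability $\Prb{\,\cdot \mid \mathcal{F}_{m_j}}$ together with the independence of $X_{m_j+1}$ from the past, which avoids needing any uniform-in-$v$ lower bound on $\Prb{X^T v \ne 0}$ (establishing such uniformity would instead require an upper-semicontinuity plus compactness argument on the unit sphere, which this route sidesteps). Everything else — the block independence, the geometric tail, and the final linearity step — is routine.
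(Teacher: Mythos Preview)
Your proposal is correct and takes a genuinely different route from the paper. The paper decomposes $T_1 = U_1 + \cdots + U_n$, where $U_k$ is the waiting time for the running span to jump from dimension $k-1$ to $k$, and then asserts a geometric tail $\Prb{U_k = j} \le (1-p)^{j-1}$ for each $U_k$; this implicitly requires a \emph{uniform} lower bound $\inf_{\norm{v}=1}\Prb{X^T v \ne 0} \ge p > 0$, since the orthogonal direction $v$ is random. Your approach sidesteps this uniformity issue entirely: your dimension-by-dimension induction only needs the conditional probability $\Prb{X_{m_j+1}\notin W \mid \mathcal{F}_{m_j}}$ to be pointwise positive on a positive-probability event, which integration turns into $\Prb{d_{m_j+1}\ge j+1}>0$ without any infimum over $v$. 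The geometric tail is then manufactured at the block level, where i.i.d.\ structure is automatic. The trade-off is that the paper's decomposition, once the uniformity gap is patched (e.g.\ via the upper-semicontinuity plus compactness argument you mention), yields the sharper bound $\E{T_1}\le n/p$, whereas your block argument gives $\E{T_1}\le n/p_0$ with $p_0$ potentially much smaller; but for the lemma as stated, finiteness is all that matters, and your route gets there more cleanly.
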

\begin{proof}
We prove that $T_1$ is bounded by a geometric random variable and so its expectation must exist. Let $\mathcal{S}_m = span[X_1,\ldots,X_m]$. In this notation, we have that $T_1 = \inf\lbrace m > 0: \mathcal{S}_m = n \rbrace$. We will now decompose $T_1$ into $P_1,\ldots,P_n$, where $P_k = \inf\lbrace m > P_{k-1}: \dim(\mathcal{S}_m) = k \rbrace$ with $P_0 = 0$. By construction:
$$0 = P_0 < P_1 < \cdots < P_n = T_1 $$
Let $U_1,\ldots,U_n$ denote the inter-arrival times defined by $U_k = P_k - P_{k-1}$. On the event that $U_{k} = j$, we have that $\exists v \in \mathbb{R}^n$ with $\norm{v} = 1$ such that $\mathcal{S}_{P_{k-1}+j-1}$ is orthogonal to $v$, and by Assumption \ref{assumption:X spans Rn}, there is a $p = 1 - \mathbb{P}{X_1^T v = 0} > 0$. Then, $\Prb{U_k = j} \leq (1- p)^{j-1}$. Thus, $\E{U_k} < \infty$. Therefore, $\E{T_1} = \E{U_1} + \E{U_2} + \cdots \E{U_n} < \infty$. Now, by Lemma \ref{lemm:interarrivalTimeIndependence}, $T_1,\ldots,T_k$ are independent and identically distributed. Therefore, $\E{S_k} = \E{T_1} + \cdots + \E{T_k} = k \E{T_1}$.  
\qquad \end{proof}

\begin{lemma}
\label{lemm:iidmathcalXk}
If $X_1,X_2,\ldots$ are independent and identically distributed, then $\mathcal{X}_0,\mathcal{X}_1,\ldots$ defined in (\ref{eqn:bulk XX transpose}) are independent and identically distributed. In particular, the eigenvalues of $\mathcal{X}_0,\mathcal{X}_1,\ldots$ are independent and identically distributed.
\end{lemma}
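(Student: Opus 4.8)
The plan is to mirror the proof of Lemma~\ref{lemm:interarrivalTimeIndependence}, exploiting the same regeneration property of the renewal sequence $\lbrace S_k \rbrace$, but now tracking the matrix-valued block sums $\mathcal{X}_k$ rather than the scalar inter-arrival times. The essential observation is that $\mathcal{X}_k = \sum_{j=S_k+1}^{S_{k+1}} X_j X_j^T$ is a measurable function of the post-$S_k$ sequence alone: both the endpoint $S_{k+1}$ (the first index past $S_k$ at which $\text{span}[X_{S_k+1},\ldots]$ fills $\mathbb{R}^n$) and the summands are determined by $X_{S_k+1}, X_{S_k+2}, \ldots$. Writing $\mathcal{X}_k = g(X_{S_k+1}, X_{S_k+2}, \ldots)$ for a single fixed measurable functional $g$ (the same for every $k$) is the structural fact on which both claims rest.

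For identical distribution, I would condition on $\lbrace S_k = s \rbrace$. Since $S_0, \ldots, S_k$, and hence $\mathcal{X}_0, \ldots, \mathcal{X}_{k-1}$, are all determined by $X_1, \ldots, X_s$ on this event, we have $\lbrace S_k = s \rbrace \in \sigma(X_1, \ldots, X_s)$. The shifted sequence $X_{s+1}, X_{s+2}, \ldots$ is independent of $\sigma(X_1, \ldots, X_s)$ and, by the i.i.d.\ assumption, has the same law as $X_1, X_2, \ldots$. Consequently, conditioned on $\lbrace S_k = s \rbrace$, the law of $\mathcal{X}_k = g(X_{s+1}, X_{s+2}, \ldots)$ equals the unconditional law of $g(X_1, X_2, \ldots) = \mathcal{X}_0$, and crucially this does not depend on $s$. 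Summing over $s$ against $\Prb{S_k = s}$ then yields $\mathcal{X}_k \stackrel{d}{=} \mathcal{X}_0$ for every $k$.

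For independence, I would run the same recursive conditioning argument as in the final display of Lemma~\ref{lemm:interarrivalTimeIndependence}. Fix $k_1 < \cdots < k_r$ and Borel sets $A_1, \ldots, A_r$ in the space of symmetric matrices, and condition the joint event $\lbrace \mathcal{X}_{k_r} \in A_r, \ldots, \mathcal{X}_{k_1} \in A_1 \rbrace$ on $\lbrace S_{k_r} = s \rbrace$. As above, $\lbrace S_{k_r} = s \rbrace$ together with $\lbrace \mathcal{X}_{k_{r-1}} \in A_{r-1}, \ldots, \mathcal{X}_{k_1} \in A_1 \rbrace$ lies in $\sigma(X_1, \ldots, X_s)$, whereas $\mathcal{X}_{k_r} = g(X_{s+1}, X_{s+2}, \ldots)$ is independent of it; this lets me factor out $\Prb{\mathcal{X}_{k_r} \in A_r}$ and recurse on the remaining $r-1$ events, establishing mutual independence.

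The eigenvalue claim is then immediate: the vector of ordered eigenvalues is a fixed measurable function of a symmetric matrix, so applying it to the i.i.d.\ family $\lbrace \mathcal{X}_k \rbrace$ preserves the i.i.d.\ property. The main obstacle is bookkeeping rather than conceptual: one must verify with care that, on $\lbrace S_{k_r} = s \rbrace$, all of the earlier blocks $\mathcal{X}_{k_1}, \ldots, \mathcal{X}_{k_{r-1}}$ and the renewal time $S_{k_r}$ genuinely reside in $\sigma(X_1, \ldots, X_s)$, so that the post-$s$ block is independent of them — this regeneration at renewal epochs is exactly what legitimizes the factorization.
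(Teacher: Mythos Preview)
Your proposal is correct and follows essentially the same regeneration-at-renewal-epochs argument as the paper: condition on the value of $S_k$, use that the post-$S_k$ sequence is independent of $\sigma(X_1,\ldots,X_{S_k})$ and equal in law to the original, and then pass to eigenvalues by composing with a fixed measurable map. Your treatment of independence is in fact a bit more explicit than the paper's, which simply asserts that the $\mathcal{X}_k$ are ``functions of independent random variables'' after invoking Lemma~\ref{lemm:interarrivalTimeIndependence}; your recursive conditioning spells out the factorization the paper leaves implicit.
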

\begin{proof}
Let $A$ and be a measurable set.
\begin{align*}
&\Prb{\mathcal{X}_k \in A} \\
&= \sum_{s=1}^\infty \sum_{t=1}^\infty \Prb{X_{s+1}X_{s+1}^T+\cdots+X_{s+t}X_{s+t}^T \in A \vert T_{k+1} = t, S_k =s} \\
&\quad \times \Prb{T_{k+1} = t \vert  S_{k} = s} \Prb{S_k = s} \\
&= \sum_{s=1}^\infty \sum_{t=1}^\infty \Prb{X_{1}X_1^T+\cdots+X_tX_t^T \in A \vert T_{1} = t} \Prb{T_{1} = t} \Prb{S_k =s } \\
&= \Prb{\mathcal{X}_0 \in A}
\end{align*}
Thus, $\mathcal{X}_0,\mathcal{X}_1,\ldots$ are identically distributed. By the independence of $X_1,X_2,\ldots$ and the independence of $T_1,T_2,\ldots$, which is established in Lemma \ref{lemm:interarrivalTimeIndependence}, $\mathcal{X}_0,\mathcal{X}_1,\ldots$ are independent because they are functions of independent random variables. Finally, since the eigenvalues of $\mathcal{X}_k$ can be calculated using the Courant-Fisher Principle, we have that they too are independent and identically distributed.
\qquad \end{proof}

\begin{lemma}
\label{lemm:lower bound on bulk XX' expected eigenvalues}
If $X_1,X_2,\ldots$ are independent and identically distributed, and satisfy Assumptions \ref{assumption:X is in L2} and \ref{assumption:X spans Rn}, then $\exists \alpha > 0$ such that for all $k$, $\E{\lambda_{\min}(\mathcal{X}_k)} \geq \alpha > 0$.
\end{lemma}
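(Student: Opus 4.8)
The plan is to reduce the uniform-in-$k$ statement to a single positivity fact and then upgrade that fact to a quantitative lower bound. First I would invoke Lemma \ref{lemm:iidmathcalXk}, which asserts that the eigenvalues of $\mathcal{X}_0, \mathcal{X}_1, \ldots$ are independent and identically distributed; in particular $\E{\lambda_{\min}(\mathcal{X}_k)} = \E{\lambda_{\min}(\mathcal{X}_0)}$ for every $k$. Thus it suffices to exhibit a single $\alpha > 0$ with $\E{\lambda_{\min}(\mathcal{X}_0)} \geq \alpha$, and the same $\alpha$ then works for all $k$ automatically.

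Next I would argue that $\lambda_{\min}(\mathcal{X}_0) > 0$ almost surely. By Lemma \ref{lemm:Expected Inter Arrival Time is Finite} we have $\E{T_1} < \infty$, so $S_1 = T_1 < \infty$ almost surely, and hence $\mathcal{X}_0 = \sum_{j=1}^{S_1} X_j X_j^T$ is a finite sum. By the very definition of the renewal time in (\ref{eqn:renewalProcess}) (with $S_0 = 0$), on the event $\{S_1 < \infty\}$ the vectors $X_1, \ldots, X_{S_1}$ span $\mathbb{R}^n$. Consequently, for any unit vector $v$, $v^T \mathcal{X}_0 v = \sum_{j=1}^{S_1} (X_j^T v)^2 > 0$, since a vanishing sum would force $v$ to be orthogonal to a spanning set. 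Therefore $\mathcal{X}_0$ is positive definite, and $\lambda_{\min}(\mathcal{X}_0) > 0$, almost surely.

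Finally, I would convert almost-sure positivity into a strictly positive expectation carrying a uniform constant. Since $\Prb{\lambda_{\min}(\mathcal{X}_0) > 0} = 1$ and the events $\{\lambda_{\min}(\mathcal{X}_0) \geq 1/m\}$ increase to $\{\lambda_{\min}(\mathcal{X}_0) > 0\}$, continuity of measure yields a $c > 0$ with $\Prb{\lambda_{\min}(\mathcal{X}_0) \geq c} \geq 1/2$. Then $\E{\lambda_{\min}(\mathcal{X}_0)} \geq c\, \Prb{\lambda_{\min}(\mathcal{X}_0) \geq c} \geq c/2 =: \alpha > 0$, and combining with the first step gives $\E{\lambda_{\min}(\mathcal{X}_k)} \geq \alpha$ for all $k$.

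The main, and essentially only, subtlety is this last step: one must avoid inferring a positive lower bound on the expectation directly from almost-sure positivity without first producing the explicit threshold $c$ on a set of probability bounded away from zero. Everything else — the reduction to $k = 0$ and the positive-definiteness of $\mathcal{X}_0$ — is structural and follows immediately from Lemmas \ref{lemm:iidmathcalXk} and \ref{lemm:Expected Inter Arrival Time is Finite} together with the construction of the renewal process (which is where Assumptions \ref{assumption:X is in L2} and \ref{assumption:X spans Rn} enter).
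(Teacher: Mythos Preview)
Your proof is correct and follows essentially the same strategy as the paper: reduce to $k=0$ via Lemma \ref{lemm:iidmathcalXk}, use the definition of $S_1$ together with Lemma \ref{lemm:Expected Inter Arrival Time is Finite} to see that $\mathcal{X}_0$ is almost surely positive definite, and conclude that $\E{\lambda_{\min}(\mathcal{X}_0)}>0$. The only difference is in the final step: the paper argues by contradiction, supposing $\E{v^T\mathcal{X}_0 v}=0$ for some fixed unit $v$ and deducing $v^T\mathcal{X}_0 v=0$ a.s., which contradicts spanning; your explicit threshold argument instead passes directly from $\lambda_{\min}(\mathcal{X}_0)>0$ a.s.\ to a positive expectation. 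Your route is arguably tidier here, since the paper's fixed-$v$ contrapositive literally controls $\inf_v \E{v^T\mathcal{X}_0 v}$ rather than $\E{\inf_v v^T\mathcal{X}_0 v}=\E{\lambda_{\min}(\mathcal{X}_0)}$, so your caution about that last step is well placed.
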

\begin{proof}
By Lemma \ref{lemm:iidmathcalXk}, we need only consider $\mathcal{X}_0$. Suppose there exists a $v \in \mathbb{R}^n$ with $\norm{v} = 1$ such that $\E{ v^T \mathcal{X}_0 v } = 0$. Note, by Assumption \ref{assumption:X is in L2} and Cauchy-Schwarz, the expectation is well defined. Since $\mathcal{X}_0 \succeq 0$ by construction, almost surely
$$ 0 = v^T \mathcal{X}_0 v = \sum_{s=1}^{S_1} (X_s^T v)^2 $$
Thus, $X_s^T v = 0$ for all $s = 1,\ldots,S_1$. However, by construction, Assumption \ref{assumption:X spans Rn} and Lemma \ref{lemm:Expected Inter Arrival Time is Finite}, $X_1,\ldots,X_{S_1}$ span $\mathbb{R}^n$ and $S_1 < \infty$ almost surely, hence there is an $s \leq S_1$ such that $X_s^Tv \neq 0$ almost surely. Therefore, we have a contradiction. 
\qquad \end{proof}

\section{Some Properties of $L^\infty$ Random Variables} In this section, we establish some useful properties of $L^\infty$ random variables.
\begin{lemma}
\label{lemm:IfXLInfthenXXTisWellBounded}
Suppose $X \in L^\infty$ random variable taking values in $\mathbb{R}^n$. Then, for $C = \norm{X}_\infty$, 
$$ 0 \preceq X X^T \preceq nC^2 I \quad\text{ and }\quad - nC^2I \preceq XX^T - \E{XX^T} \preceq nC^2 I$$
almost surely.
\end{lemma}
\begin{proof}
The lower bound is straightforward. For the upper bound, let $v$ be any unit vector. By Cauchy-Schwartz:
\begin{align*}
v^TX X^T  v &= (X^T v)^2 
	\leq \norm{X}_2^2 \norm{v}_2^2
	\leq nC^2 \norm{v}_2^2 
	\leq nC^2
\end{align*}
where $C = \norm{X}_{\infty}$. Thus the first set of inequalities holds almost surely. Moreover, the first set of inequalities imply $0 \preceq \E{XX^T} \preceq nC^2 I$. Hence,
\begin{align*}
- \E{XX^T} & \preceq XX^T - \E{XX^T} \preceq nC^2 I - \E{XX^T} \\
- nC^2 I &\preceq XX^T - \E{XX^T} \preceq nC^2 I
\end{align*}
\qquad \end{proof}
\begin{lemma}
\label{lemm:If X L Infty and mean 0, the power of its sums are well controlled}
Let $Z_1,Z_2,\ldots,Z_k$ be mean zero, independent random variables with $\norm{Z_1}_\infty = \cdots = \norm{Z_k}_\infty = D > 0$. Then
$$ \E{\left(\sum_{j=1}^k Z_j\right)^{6}} \leq \bigO{D^{6} k^3}$$
\end{lemma}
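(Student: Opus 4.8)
The plan is to expand the sixth power of the sum and exploit independence together with the mean-zero hypothesis to annihilate all but $\bigO{k^3}$ of the resulting terms. Writing $S = \sum_{j=1}^k Z_j$, I would begin from the multinomial expansion
$$ \E{S^6} = \sum_{j_1,\ldots,j_6=1}^k \E{Z_{j_1} Z_{j_2} \cdots Z_{j_6}}, $$
where the sum runs over all ordered $6$-tuples of indices in $\lbrace 1,\ldots,k \rbrace$.

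The key observation is that $\E{Z_{j_1} \cdots Z_{j_6}}$ vanishes whenever some index value occurs exactly once in the tuple: by independence that lone factor separates out, and its expectation is zero by hypothesis. Consequently, the only surviving terms are those in which every distinct index value appears with multiplicity at least two. Since there are only six positions, such a tuple involves at most three distinct index values, so the surviving multiplicity patterns are exactly $(2,2,2)$, $(4,2)$, $(3,3)$, and $(6)$.

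Next I would control both the magnitude and the number of the surviving terms. Each surviving term is bounded in absolute value by $D^6$, since $|Z_j| \leq D$ almost surely. For the count, the number of ordered $6$-tuples built from exactly $d$ distinct index values (necessarily $d \in \lbrace 1,2,3 \rbrace$) is at most $\binom{k}{d} d^6 \leq d^6 k^d$; summing over $d = 1,2,3$ shows that the number of surviving terms is $\bigO{k^3}$, with the dominant contribution arising from the pattern $(2,2,2)$. Multiplying the $\bigO{k^3}$ count by the per-term bound $D^6$ yields $\E{S^6} \leq \bigO{D^6 k^3}$, as desired.

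The argument is essentially routine combinatorics, so the only mild obstacle is the bookkeeping: verifying that every vanishing term genuinely contains a singleton index (so that the mean-zero property applies), and confirming that the leading count is $k^3$ rather than a higher power. The latter is immediate once one notes that six positions with all multiplicities at least two force at most three distinct indices, which caps the exponent of $k$ at three.
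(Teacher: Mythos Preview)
Your proposal is correct and follows essentially the same approach as the paper: expand $S^6$, use independence and the mean-zero property to discard every term containing a singleton index, and then count the surviving patterns $(6)$, $(4,2)$, $(3,3)$, $(2,2,2)$ to obtain the $\bigO{k^3}$ bound. The only cosmetic difference is that the paper computes the exact multinomial coefficients for each pattern, whereas you use the coarser bound $\binom{k}{d} d^6$; either way the leading term is $\bigO{k^3}$.
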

\begin{proof}
Note that $\E{Z_j} = 0$ and $Z_j$ are independent. Hence, in the polynomial expansion, any monomial with a term that has unity exponent is going to have a zero expectation. So, we need to count all monomials whose terms have exponents at least two in the expansion:
\begin{remunerate}
\item There are $k$ terms of the form $(Z_j)^6$.
\item There are $\NchooseK{k}{2}$ terms of the form $(Z_j)^2 (Z_i)^4$ with $\frac{6!}{4!2!} = 15$ ways of choosing the exponents.
\item There are $\NchooseK{k}{2}$ terms of the form $(Z_j)^3 (Z_i)^3$ with $\frac{6!}{3!3!} = 20$ ways of choosing the exponents.
\item There are $\NchooseK{k}{3}$ terms of the form $(Z_j)^2 (Z_i)^2 (Z_l)^2$ with $\frac{6!}{2!2!2!} = 90$ ways of choosing the exponents.
\end{remunerate}
Since $-D \leq Z_j \leq D$ almost surely by assumption, we have that:
$\E{\left( \sum_{j=1}^k Z_j \right)^6} \leq \left(k + 35k^2 + 90k^3 \right)D^6$.
\qquad \end{proof}
\end{appendix}

\section*{Acknowledgements} This work is supported by the Department of Statistics at the University of Chicago. I would also like to express my deep gratitude to Mihai Anitescu for his insightful feedback on the many drafts of this work and his generous patience and guidance throughout the research and publication process. I would also like to thank Madhukanta Patel, who, despite her illnesses and advanced age, scolded me if I momentarily stopped working on this manuscript while sitting at her bedside. I am also deeply grateful to the reviewers for their detailed readings and useful criticisms.
  
\bibliography{kSGD_linear}
\bibliographystyle{siam}
\end{document}